\theoremstyle{plain}
\newtheorem{thm}{Theorem}[section]
\newtheorem{lem}[thm]{Lemma}
\newtheorem{prop}[thm]{Proposition}
\newtheorem{cor}[thm]{Corollary}
\newtheorem{thmintro}{Theorem}
\newtheorem{corintro}[thmintro]{Corollary}
\theoremstyle{definition}
\newtheorem{defn}[thm]{Definition}
\newtheorem{rmk}[thm]{Remark}
\newtheorem*{ques*}{Question}
\newtheorem*{probl*}{Problem}
\newcommand{\mc}{\mathcal}
\newcommand{\mf}{\mathfrak}
\newcommand{\mscr}{\mathscr}
\newcommand{\sq}{\subseteq}
\newcommand{\ra}{\rightarrow}
\newcommand{\R}{\mathbb{R}}
\newcommand{\N}{\mathbb{N}}
\newcommand{\om}{\omega}
\newcommand{\Om}{\Omega}
\begin{document}

\title{On uniqueness of coarse median structures}

\author[E.\,Fioravanti]{Elia Fioravanti}\address{Institute of Algebra and Geometry, Karlsruhe Institute of Technology}\email{elia.fioravanti@kit.edu} 
\thanks{E.\,F.\ is supported by Emmy Noether grant 515507199 of the Deutsche Forschungsgemeinschaft (DFG)}

\author[A.\,Sisto]{Alessandro Sisto}
    \address{Department of Mathematics, Heriot-Watt University and Maxwell Institute for Mathematical Sciences, Edinburgh, UK}
    \email{a.sisto@hw.ac.uk}

\begin{abstract}
    We show that any product of bushy hyperbolic spaces has a unique coarse median structure, and that having a unique coarse median structure is a property closed under relative hyperbolicity. As a consequence, in contrast with the case of mapping class groups, there are non-hyperbolic pants graphs that have unique coarse median structures.
\end{abstract}

\maketitle

\section{Introduction}

Coarse median structures were introduced by Bowditch in \cite{Bow-coarsemedian} as a coarsening of the classical notion of median algebra. Roughly, a coarse median is a notion of centre for triangles in a given metric space. Bowditch's motivation was to study the coarse geometry of mapping class groups and Teichm\"uller spaces, in particular quasi-isometric rigidity \cite{Bow-largescale, Bow-Teich}. Other examples of coarse median groups include cocompactly cubulated groups and hierarchically hyperbolic groups \cite{Bow-largescale}.

Since a coarse median structure on a given metric space is an additional structure, it is natural to wonder how canonical it is. A coarse median on the metric space $X$ is in particular a map $\mu\colon X^3\to X$, and therefore it is natural to define two coarse medians to be \emph{equivalent} if they are bounded distance from each other. If all coarse medians on a given space are within bounded distance of each other, then we say that the space admits a \emph{unique coarse median structure}.

The problem of uniqueness of coarse median structures, besides being a natural question, is further motivated by the theory of coarse-median preserving automorphisms, which are better behaved than general automorphisms \cite{Fio10a,Fio10e,fioravanti2025growth}. For instance, fixed subgroups are finitely generated for coarse-median preserving automorphisms of cocompactly cubulated groups, but this is not true for general automorphisms. Clearly, if a group admits a unique coarse median structure, then all automorphisms preserve it.

The only groups that are so far known to have a unique coarse median structure are Gromov-hyperbolic groups \cite{NWZ1}, with some further examples having a coarse median structure that is unique among those coming from cocompact cubulations \cite{FLS}. Surprisingly to the authors, the mapping class group of the five-holed sphere does not have a unique coarse median structure \cite{Mangioni}, and in fact it has uncountably many. The same also holds for a lot of right-angled Artin groups, including the one on the pentagon (which however has a unique coarse median structure coming from cocompact cubulations).

The goal of this paper is to provide examples of groups and spaces that do have unique coarse median structures. First, we consider products of bushy hyperbolic spaces. A hyperbolic space $X$ is \emph{bushy} if there exists a constant $\lambda\geq 0$ such that, for all points $x\in X$, there exist three ideal points $\xi_1,\xi_2,\xi_3\in\partial_{\infty}X$ with pairwise Gromov products based at $x$ bounded above by $\lambda$.

\begin{thmintro}
\label{thmintro:A}
    Let $X$ be a finite product of bushy geodesic hyperbolic spaces. Then $X$ admits a unique coarse median structure.
\end{thmintro}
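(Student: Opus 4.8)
The plan is to compare an arbitrary coarse median $\mu$ on $X = X_1 \times \cdots \times X_n$ with the natural \emph{product median} $m$, defined coordinatewise by $m(a,b,c) = (m_1(a_1,b_1,c_1),\dots,m_n(a_n,b_n,c_n))$, where $m_i$ is the essentially unique coarse median on the hyperbolic factor $X_i$ and $a_i = \pi_i(a)$ denotes the $i$-th coordinate under the projection $\pi_i \colon X \to X_i$. Existence of a coarse median is not the issue, since a finite product of coarse median spaces is again coarsely median; the content is that \emph{every} coarse median $\mu$ satisfies $d(\mu(a,b,c),m(a,b,c)) \le C$ uniformly. The strategy is to show that $\mu$ \emph{splits as a product} up to bounded error, i.e.\ that $\pi_i(\mu(a,b,c))$ depends, up to a uniformly bounded error, only on the triple $(a_i,b_i,c_i)$. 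Granting this, the induced maps $\mu_i \colon X_i^3 \to X_i$ are themselves coarse medians on the factors, so by uniqueness of coarse median structures on hyperbolic spaces we get $\mu_i \sim m_i$ for each $i$; since $n$ is finite and two points of $X$ are uniformly close exactly when all their coordinates are close, reassembling over $i$ yields $\mu \sim m$.

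The mechanism behind the single-factor uniqueness, which I would both invoke and use as a guide, is the coarse interval $I_\nu(x,y) = \{z : \nu(x,y,z) \approx z\}$ attached to a coarse median $\nu$, together with the standard fact that $\nu(x,y,z)$ lies in the coarse intersection $I_\nu(x,y) \cap I_\nu(y,z) \cap I_\nu(x,z)$. On a hyperbolic space, hyperbolicity forbids quasiflats, so any coarse median has rank one, its finite median subalgebras are trees, and $I_\mu(x,y)$ is pinned to the geodesic $[x,y]$; this is exactly the rank-one argument of \cite{NWZ1}, and it applies to bushy hyperbolic spaces (not just groups), giving $\mu_i \sim m_i$ above. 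On the product, the corresponding picture is that $I_m(x,y) \approx \prod_i [x_i,y_i]$ is a ``box'', and the triple intersection of three such boxes collapses coordinatewise to the centers $m_i$, hence is a uniformly bounded set containing $m(x,y,z)$. This is precisely why pinning the coordinates of the median suffices to pin the median itself.

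The heart of the argument, and the step I expect to be the main obstacle, is the coordinatewise splitting: showing that moving the $X_j$-coordinate ($j \neq i$) of an input leaves $\pi_i(\mu(a,b,c))$ unchanged up to bounded error. This is exactly where \emph{bushiness} of the factors must enter essentially, rather than mere hyperbolicity. Indeed $\mathbb{R}^2 = \mathbb{R} \times \mathbb{R}$ is a product of (non-bushy) hyperbolic factors carrying genuinely inequivalent coarse medians---for instance the standard coordinatewise median and a $45^\circ$-rotated one, whose values on $(0,0),(N,0),(0,N)$ drift apart by order $N$---so the factors of a product of lines are \emph{not} detectable from the coarse median alone. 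Bushiness rules this out: from every basepoint, each $X_i$ offers three ideal directions with pairwise Gromov products bounded by $\lambda$, hence an abundance of long tripods inside the $i$-th factor. The plan is to use these spread-out directions as ``witnesses'' to localize the median: for each $i$ one builds configurations whose $\mu$-median has $i$-th coordinate forced near $m_i(a_i,b_i,c_i)$ independently of the other coordinates, the three robust hyperbolic directions pinning the center in the $i$-th factor while the coarse Lipschitz condition and the product geometry propagate this control as the remaining coordinates vary.

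Finally, with the splitting in hand and the factor medians $\mu_i$ produced, one verifies the routine points: that each $\mu_i$ satisfies Bowditch's axioms (using that $\pi_i$ is coarsely Lipschitz and admits coarse product sections), that all constants are uniform because $n$ is finite, and that the comparisons $\mu_i \sim m_i$ assemble to $\mu \sim m$. The only genuinely new ingredient beyond \cite{NWZ1} and the product construction is the bushiness-driven detection of factors in the third paragraph; everything else is bookkeeping with the coarse median axioms and the geometry of boxes in the product.
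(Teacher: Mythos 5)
There is a genuine gap: the entire proof rests on the ``coordinatewise splitting'' step --- that $\pi_i(\mu(a,b,c))$ depends only on $(a_i,b_i,c_i)$ up to uniform error --- and you do not prove it. What you offer is a heuristic (``three robust hyperbolic directions pinning the center in the $i$-th factor while the coarse Lipschitz condition \ldots{} propagate[s] this control''), with no construction of the witnessing configurations and no mechanism by which they would constrain $\mu$. The difficulty is that an arbitrary coarse median $\mu$ on $X$ has, a priori, no relationship whatsoever to the product decomposition: the axioms (M0), (CM1), (CM2) only see the metric, and in a product no subset is Morse, so the one general tool that forces a coarse median to respect ambient geometry (\Cref{cor:Morse_qc}) gives nothing here. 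Your $\R\times\R$ example (cf.\ \Cref{rmk:R_factor}) correctly shows bushiness is necessary, but identifying \emph{where} it must enter is not the same as making it enter; as stated, the splitting claim is essentially equivalent to the theorem itself.

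For comparison, the paper does not prove splitting directly. It first shows (\Cref{prop:unique_product_trees}, via Zeidler's metrization \Cref{prop:Zeidler} and Bowditch's rigidity for finite-rank median metric spaces) that a product of furry $\R$--trees carries a \emph{unique} Lipschitz median operator, so every asymptotic cone of $X$ sees the standard median no matter what $\mu$ is. The real work is then transferring this cone-level rigidity back to $X$ --- the paper explicitly warns this does not happen automatically --- which is done by Bowditch's strong/weak limit machinery (\Cref{lem:Bowditch}) applied to maximal flats versus $h$--cuboids (\Cref{lem:weak_limit}), together with a Furstenberg-type density statement for self quasi-isometric embeddings of $\R^p$ (\Cref{cor:Fuerstenberg}). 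This yields uniform $\mu$--quasiconvexity of maximal flats (\Cref{prop:quasiconvex_flats}) and hence of products of geodesic segments (\Cref{cor:straight_cubes}, which is where bushiness is used concretely: a segment is coarsely the intersection of two bi-infinite quasigeodesics). Uniqueness then follows by comparing intervals via \Cref{lem:cm_basics}(3). If you want to salvage your outline, the splitting step needs to be replaced by, or derived from, an argument of this kind; nothing in your third paragraph currently does that job.
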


Note that we do need the hyperbolic spaces to be bushy (Remark \ref{rmk:R_factor}), and in fact this is the phenomenon exploited in \cite{Mangioni} to construct different coarse median structures on a mapping class group.

Next, we show that having a unique coarse median structure is a property closed under relative hyperbolicity. Existence of coarse median structures in this case is due to Bowditch \cite{Bow-cmrelhyp}; in this paper we deal with uniqueness only.

\begin{thmintro}
\label{thmintro:B}
    Let $(X,\mc{P})$ be a geodesic, relatively hyperbolic space. Suppose that each $P\in\mc{P}$ admits a unique coarse median structure, and that there are only finitely many isometry types in $\mc{P}$. Then $X$ admits a unique coarse median structure.
\end{thmintro}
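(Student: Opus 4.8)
The plan is to fix two coarse medians $\mu,\mu'$ on $X$, which after enlarging constants we may assume share the same pair of coarse-median constants, and to bound $d(\mu(x,y,z),\mu'(x,y,z))$ uniformly by comparing how each median interacts with the relatively hyperbolic structure. I would work with the coned-off space $\wh X$, which is Gromov-hyperbolic, together with the peripherals $P\in\mc P$ and their coarse gate (nearest-point) maps $\mf g_P\colon X\ra P$. The geometry of $X$ is governed by a distance formula: the distance between two points is, up to uniform multiplicative and additive error, the $\wh X$-distance plus the sum of those peripheral gate-distances that exceed a fixed threshold. The strategy is to prove that $\mu$ and $\mu'$ induce the \emph{same} data in these $\wh X$- and peripheral coordinates, and then to conclude via the distance formula.

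The first step is to show that any coarse median $\mu$ descends to a coarse median $\wh\mu$ on $\wh X$, i.e.\ that the coning projection $q\colon X\ra\wh X$ is a coarse median morphism. Granting this, $\wh X$ is hyperbolic, so by \cite{NWZ1} it carries a unique coarse median structure; hence $q\circ\mu$ and $q\circ\mu'$ agree up to bounded distance, and $\mu(x,y,z)$ and $\mu'(x,y,z)$ have the same image in $\wh X$. In particular this pins down which peripherals, if any, the median ``enters'', and locates it away from the peripherals whenever the triangle $xyz$ is hyperbolic-like. The second step is to show that each peripheral $P$ is coarsely convex for every coarse median, so that $\mf g_P$ is a coarse median morphism and $\mu$ restricts to a coarse median $\mu_P:=\mf g_P\circ\mu|_{P^3}$ on $P$. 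By hypothesis each $P$ has a unique coarse median structure, and since there are only finitely many isometry types in $\mc P$ the comparison constants may be taken uniform over $\mc P$; thus $\mu_P\approx\mu'_P$ for every $P$, that is $\mf g_P(\mu(x,y,z))\approx\mf g_P(\mu'(x,y,z))$ whenever $x,y,z\in P$. A coarse-Lipschitz/gate argument then upgrades this to all triples whose median penetrates $P$ deeply, using that $\mf g_P$ is a coarse retraction and that deep penetration forces $\mf g_P(x),\mf g_P(y),\mf g_P(z)$ to govern the median.

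Combining the two steps with the distance formula yields the theorem: $\mu(x,y,z)$ and $\mu'(x,y,z)$ have matching images in $\wh X$ and matching gates into every peripheral with large penetration, hence are uniformly close, while triples with no deep penetration are controlled entirely by the $\wh X$-coordinate.

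The main obstacle is the compatibility claimed in the two structural steps --- that an \emph{arbitrary} coarse median descends to $\wh X$ and restricts to each $P$. This is not formal: a coarse median is only required to be coarse-Lipschitz and to approximate finite median algebras with respect to the given metric, and in spaces that are neither hyperbolic nor relatively hyperbolic (e.g.\ a Euclidean plane) this leaves room for genuinely inequivalent medians whose intervals ignore the metric geometry, as exploited in \cite{Mangioni}. Here the point is that the transversal directions of $X$ are hyperbolic: the rigidity of the coarse median on $\wh X$ coming from \cite{NWZ1} forces the betweenness relation of $\mu$ to respect the coned-off geometry, which in turn keeps same-peripheral intervals near the corresponding $P$ and makes both $q$ and $\mf g_P$ coarse median morphisms. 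Making this forcing quantitative, uniformly in $P$ and in the penetration depth, is the technical heart of the argument.
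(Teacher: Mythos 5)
Your outline has the right overall shape (compare the two medians peripheral-by-peripheral plus a hyperbolic ``transversal'' comparison), but it leaves unproved exactly the step that makes the theorem nontrivial, and the justification you sketch for it is circular. Both of your structural claims --- that the coning map $q\colon X\to\wh X$ is a coarse median morphism for an \emph{arbitrary} coarse median $\mu$, and that each peripheral $P$ is quasiconvex for $\mu$ so that $\mu$ restricts to $P$ --- require some mechanism tying the abstract median $\mu$ to the metric geometry of $X$. You appeal to ``the rigidity of the coarse median on $\wh X$ coming from \cite{NWZ1}'' to force the betweenness relation of $\mu$ to respect the coned-off geometry, but uniqueness of the coarse median on $\wh X$ only helps \emph{after} you know that $q\circ\mu$ is a coarse median on $\wh X$, which is precisely the compatibility you are trying to establish. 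The missing ingredient is a statement of the form: in any geodesic $C$--coarse median space, the interval $[x,y]_\mu$ contains a uniform-quality quasigeodesic from $x$ to $y$, and every point of $[x,y]_\mu$ (in particular $\mu(x,y,z)$) lies on such a quasigeodesic inside the interval. This is \Cref{lem:quasi-geodesics_in_intervals} and \Cref{lem:quasi-geodesics_through_point} in the paper; it is what converts metric statements (Morse property of peripherals, behaviour of quasigeodesics under coning, quasigeodesic-triangle structure) into statements about arbitrary coarse medians. Without it, neither of your two steps gets off the ground, and you have correctly identified this as ``the technical heart'' without supplying it.

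Granting that lemma, your route would likely work but is heavier than necessary. The paper does not pass through $\wh X$, gate maps, or a distance formula at all: it quotes Dru\c{t}u--Sapir to the effect that peripherals are Morse (hence, by the quasigeodesic lemma, quasiconvex for every coarse median, giving the peripheral comparison you want), and that every quasigeodesic triangle in $X$ has a ``barycentre'' which is either a point near all three sides or a peripheral $B$ met by the three sides in a tripod-like pattern. Running the triangle with sides chosen inside the $\mu$--intervals and inside the $\nu$--intervals, \Cref{lem:cm_basics}(3) and (5) pin both $\mu(x,y,z)$ and $\nu(x,y,z)$ to the barycentre point, or to $\mu(a,b,c)$ resp.\ $\nu(a,b,c)$ for the three entry points $a,b,c\in B$, which agree by the peripheral comparison. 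This handles in one stroke what your plan splits into the $\wh X$--coordinate, the deep-penetration case, and the shallow case, and it avoids having to prove a distance formula or that $q$ and $\mf{g}_P$ are median morphisms. If you want to salvage your approach, prove the quasigeodesics-in-intervals lemma first; at that point you may as well use the barycentre statement directly.
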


The two theorems imply for instance that the right-angled Artin groups $F_2\times F_2$ and $\mathbb Z* (F_2\times F_2)$ admit unique coarse median structures.

Combining the two results we also obtain uniqueness of coarse median structures for pants graphs in low complexity, in contrast with the mapping class group case. The pants graph of a finite-type surface was introduced by Hatcher and Thurston \cite{Hatcher-Thurston}. Brock showed that it is quasi-isometric to Teichm\"uller space with the Weil--Petersson metric \cite{Brock}.

\begin{corintro}
    Let $S$ be a connected finite-type surface of genus $g$ with $p$ punctures, and suppose that $3g+p\leq 6$. Then the pants graph of $S$ has a unique coarse median structure.
\end{corintro}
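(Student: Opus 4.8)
The plan is to stratify the finitely many surfaces satisfying $3g+p\leq 6$ by their complexity $\xi(S)=3g-3+p$, observing that the hypothesis is exactly the bound $\xi(S)\leq 3$, and to treat each value of $\xi(S)$ with the tools already in place. Throughout I will use that uniqueness of the coarse median structure is a quasi-isometry invariant: a quasi-isometry transports coarse medians to coarse medians and preserves the relation of lying at bounded distance, so it suffices to understand the relevant spaces up to quasi-isometry. I will write $\mc{P}(S)$ for the pants graph, which is a connected graph and hence a geodesic space.

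First I would dispose of the low-complexity strata. When $\xi(S)\leq 0$, i.e.\ for $S_{0,3}$ and $S_{1,0}$, the pants graph is bounded and uniqueness is vacuous. When $\xi(S)=1$, i.e.\ for $S_{0,4}$ and $S_{1,1}$, the graph $\mc{P}(S)$ is quasi-isometric to the Farey graph, a bushy geodesic hyperbolic space. When $\xi(S)=2$, i.e.\ for $S_{0,5}$ and $S_{1,2}$, the graph $\mc{P}(S)$ is Gromov hyperbolic by the theorem of Brock--Farb, non-elementary, and acted on coboundedly by the mapping class group, hence bushy. In both of the latter cases \Cref{thmintro:A}, applied to a product with a single factor, shows that $\mc{P}(S)$ has a unique coarse median structure.

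The heart of the argument is the stratum $\xi(S)=3$, consisting of $S_{0,6}$, $S_{1,3}$ and $S_{2,0}$, where I would invoke \Cref{thmintro:B}. By the relatively hyperbolic structure on Weil--Petersson space established by Brock--Masur, the pair $(\mc{P}(S),\mc{Q})$ is relatively hyperbolic, where $\mc{Q}$ is the family of maximal product regions. Concretely, these are the subgraphs on which a given separating curve $\gamma$ is present, for those $\gamma$ that cut $S$ into two pieces $S_1,S_2$ with $\xi(S_1)=\xi(S_2)=1$; since a separating curve satisfies $\xi(S)=\xi(S_1)+\xi(S_2)+1$, the only alternative split of a complexity-$3$ surface peels off a pair of pants and leaves a single hyperbolic complexity-$2$ factor, which is absorbed into the hyperbolic part rather than contributing a peripheral. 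Each such region is quasi-isometric, with its induced metric, to the product $\mc{P}(S_1)\times\mc{P}(S_2)$, hence to a product of two Farey graphs, i.e.\ to a product of bushy geodesic hyperbolic spaces. By \Cref{thmintro:A} the product of two Farey graphs has a unique coarse median structure, and by quasi-isometry invariance so does each peripheral. Finally, the mapping class group acts on $\mc{P}(S)$ by isometries with finitely many orbits of separating curves, so there are only finitely many isometry types among the members of $\mc{Q}$. Thus $(\mc{P}(S),\mc{Q})$ meets the hypotheses of \Cref{thmintro:B}, and $\mc{P}(S)$ admits a unique coarse median structure.

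The step I expect to be the main obstacle is not the formal combination of \Cref{thmintro:A} and \Cref{thmintro:B}, but the careful import of the Brock--Masur structure theory: one must confirm that the peripheral sets are exactly the rank-$2$ product regions with both factors unbounded, that each is quasi-isometrically embedded with induced metric quasi-isometric to the product of two Farey graphs, and that quasi-isometry invariance of uniqueness---rather than an on-the-nose isometric product decomposition---is enough to feed these peripherals into \Cref{thmintro:B}.
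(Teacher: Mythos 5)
Your proof is correct and follows essentially the same route as the paper: Brock--Farb hyperbolicity for complexity at most $2$ and Brock--Masur relative hyperbolicity with peripherals quasi-isometric to products of two Farey graphs for complexity $3$, combined with Theorems \ref{thmintro:A} and \ref{thmintro:B} and the (true, standard) quasi-isometry invariance of uniqueness. The only harmless difference is that for the hyperbolic strata you route through Theorem \ref{thmintro:A} and must therefore verify bushiness, whereas \Cref{cor:unique_hyp} already gives uniqueness for an arbitrary geodesic hyperbolic space.
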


\begin{proof}
    Given the constraints on $g$ and $p$, the pants graph is (either hyperbolic or) hyperbolic relative to products of two Farey graphs, see \cite[Theorem 1.1]{BF:hyp} and \cite[Theorem 1]{BM:rel_hyp}. Theorems \ref{thmintro:A} and \ref{thmintro:B} therefore apply.
\end{proof}

Mangioni's arguments suggest that mapping class groups of higher-complexity surfaces should have uncountably many coarse median structures, but there is a chance that the answer to the following is affirmative:

\begin{ques*}
    Do all pants graphs of finite-type surfaces have a unique coarse median structure?
\end{ques*}

The main challenge in answering the question above is that we have much less control on quasigeodesics in pants graphs that are not relatively hyperbolic.

There are several other natural problems that arise regarding the classification of groups with unique coarse median structures, including of particular interest:

\begin{probl*}
    Classify right-angled Artin groups with unique coarse median structures.
\end{probl*}

\subsection*{Outline and proof ideas}

In Section \ref{sec:prelim} we cover preliminaries, the only novel result being Corollary \ref{cor:Fuerstenberg}, which gives a criterion for a quasiflat in a coarse median space to be quasiconvex.

In Section \ref{sec:rel_hyp}, we prove Theorem \ref{thmintro:B}(=\ref{thm:rel_hyp}). Here we exploit the fact that intervals in coarse median spaces contain uniform quasigeodesics (Lemmas \ref{lem:quasi-geodesics_in_intervals} and \ref{lem:quasi-geodesics_through_point}) and that quasigeodesic triangles in relatively hyperbolic spaces are well-understood.

In Section \ref{sec:prod_hyp}, we prove Theorem \ref{thmintro:A}(=\ref{thm:prod_hyp_unique_median}). We exploit uniqueness of median operators in products of furry trees (which follows from work of Bowditch) and tricks with asymptotic cones to show that flats in products of bushy hyperbolic spaces are quasiconvex with respect to any coarse median structure. From this it is not hard to deduce uniqueness of the coarse median structure. (Uniqueness of the median operators on asymptotic cones does not seem to directly pull back to uniqueness of the coarse median structure on the space, in general.)

\section{Preliminaries}\label{sec:prelim}
\subsection{Median algebras and median spaces}

In this subsection we recall basic notions related to median algebras, see e.g.\ \cite{CDH,Bow-cmrelhyp,Bow-book} for an introduction.

Let $\Om$ be a set and $m\colon\Om^3\ra\Om$ a map. The pair $(\Om,m)$ is a \emph{median algebra} if the following identities are satisfied for all elements $x,y,z,p\in\Om$:
\begin{enumerate}
\item[(M0)] symmetry and localisation: $m(x,y,z)=m(y,x,z)=m(x,z,y)$ and $m(x,x,y)=x$;
\item[(M1)] associativity: $m(m(x,p,y),p,z)=m(x,p,m(y,p,z))$.
\end{enumerate}
We refer to any map $m$ satisfying these identities as a \emph{median operator} on the set $\Om$. 

The \emph{$n$--dimensional discrete cube}, for some $n\geq 0$, is the finite median algebra consisting of the set $\{0,1\}^n$ with the median operator computing majority votes on coordinates. The \emph{rank} of a median algebra $(\Om,m)$ is the largest integer $r$ such that $\Om$ contains an $r$--dimensional discrete cube as a median subalgebra; if such a largest integer does not exist, then the rank is infinite. We also define an \emph{$n$--dimensional Euclidean cube} to be a product $\prod_{1\leq i\leq n}[a_i,b_i]$ of nontrivial compact intervals $[a_i,b_i]\sq\R$, equipped with the median operator that chooses middle coordinates.

An \emph{interval} in a median algebra $(\Om,m)$ is a set of the form $[x,y]=\{m(x,y,z)\mid z\in\Om\}$ for some $x,y\in\Om$. A subset $A\sq \Om$ is \emph{convex} if we have $[a_1,a_2]\sq A$ for all $a_1,a_2\in A$. Intervals are convex.

A \emph{topological median algebra} is the data of a median algebra $(\Om,m)$ and a Hausdorff topology on $\Om$ with respect to which the map $m$ is continuous. If $\Om$ is a metric space, we say that the median operator $m$ is \emph{Lipschitz} if there exists a constant $L\geq 1$ such that the map $x\mapsto m(x,y,z)$ is $L$--Lipschitz for all $y,z\in\Om$.

A \emph{median space} is a metric space $(\Om,\rho)$ with the property that, for any three points $x_1,x_2,x_3\in\Om$, there exists a unique point $m=m(x_1,x_2,x_3)\in\Om$ such that 
\[\rho(x_i,x_j)=\rho(x_i,m)+\rho(m,x_j)\] 
for all indices $1\leq i<j\leq 3$. If $(\Om,\rho)$ is a median space and $m\colon\Om^3\ra\Om$ is the resulting ternary map, then the pair $(\Om,m)$ is a median algebra; we refer to $m$ as the \emph{induced} median operator. Conversely, if $(\Om,m)$ is a median algebra and $\rho$ is a metric on $\Om$ with the property that $\rho(x,y)=\rho(x,m(x,y,z))+\rho(m(x,y,z),y)$ for all $x,y,z\in\Om$, then the pair $(\Om,\rho)$ is a median space.

The following result is due to Zeidler, see \cite[Proposition~3.3 and Lemma~3.5]{Zeidler}.

\begin{prop}\label{prop:Zeidler}
Let $(\Om,d)$ be a geodesic metric space, and let $m\colon\Om^3\ra\Om$ be a Lipschitz median operator with finite rank. Then $\Om$ admits a metric $\rho$ bi-Lipschitz to $d$ such that $(\Om,\rho)$ is a median space inducing the median operator $m$.
\end{prop}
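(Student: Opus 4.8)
The plan is to build the metric $\rho$ from a system of measured walls. Recall that a \emph{halfspace} of the median algebra $(\Om,m)$ is a convex set $\mathfrak{h}\sq\Om$ whose complement $\mathfrak{h}^c$ is also convex, and that a halfspace \emph{separates} $x$ from $y$ if exactly one of $x,y$ lies in $\mathfrak{h}$. First I would construct a Borel measure $\mu$ on the space $\mc{H}$ of halfspaces, invariant under the flip $\mathfrak{h}\mapsto\mathfrak{h}^c$, and set
\[
\rho(x,y):=\mu\big(\mc{H}(x\mid y)\big),\qquad \mc{H}(x\mid y):=\{\mathfrak{h}\in\mc{H}\mid x\in\mathfrak{h},\ y\notin\mathfrak{h}\}.
\]
The measure must be engineered so that $\rho$ is finite and bi-Lipschitz to $d$; granting this, the remaining properties are formal.

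Indeed, once $\rho$ is a genuine metric, the median-betweenness identity is elementary. Fix $x,y,z$ and put $p=m(x,y,z)\in[x,y]$. Since halfspaces are convex and $p$ lies on the interval $[x,y]$, any halfspace containing both $x$ and $y$ contains $[x,y]\ni p$, and likewise for $\mathfrak{h}^c$; hence every $\mathfrak{h}\in\mc{H}(x\mid y)$ separates $p$ from exactly one of $x,y$, while the two families $\mc{H}(x\mid p)$ and $\mc{H}(p\mid y)$ are disjoint and exhaust $\mc{H}(x\mid y)$. Taking $\mu$-measures gives $\rho(x,y)=\rho(x,p)+\rho(p,y)$. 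Applying this to all three pairs shows that $p=m(x,y,z)$ satisfies the betweenness condition $\rho(x_i,x_j)=\rho(x_i,p)+\rho(p,x_j)$, so by the converse construction of median spaces from median algebras recalled just before the statement, $(\Om,\rho)$ is a median space whose induced median operator is $m$. (The triangle inequality for $\rho$ is automatic, as walls separating $x$ from $z$ separate $x$ from $y$ or $y$ from $z$.)

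Everything therefore reduces to constructing $\mu$ with two-sided $d$-control, which I expect to be the main obstacle. For the upper bound $\rho(x,y)\le L' d(x,y)$ one wants the measure to be dominated by $d$: the Lipschitz hypothesis on $m$ controls how the map $x\mapsto m(x,y,z)$ moves points and hence how quickly a $d$-geodesic from $x$ to $y$ can cross halfspaces, so $\mu$ can be normalised so that the total measure of walls crossed along the geodesic is comparable to its $d$-length. The lower bound $\rho(x,y)\ge d(x,y)/L'$ is the delicate point, and it is here that \emph{finite rank} is indispensable: it must be ruled out that $x$ and $y$ are separated by arbitrarily little wall-measure relative to $d(x,y)$, which can genuinely fail without a rank bound. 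I would exploit that the rank caps the number of pairwise transverse halfspaces, so that the wall structure near any pair of points is effectively $r$-dimensional and a definite amount of $\mu$-measure must lie between points at $d$-distance $d(x,y)$; organising this uniformly — for instance by splitting the relevant halfspaces into boundedly many non-crossing families and estimating each via the geodesic as in the measured-wall formalism of \cite{CDH} — is the technical heart of the argument and requires care to keep the constants independent of the chosen pair.
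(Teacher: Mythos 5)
First, a caveat on the comparison: the paper does not prove this proposition at all --- it is imported wholesale from Zeidler (\cite[Proposition~3.3 and Lemma~3.5]{Zeidler}), so there is no in-text argument to measure you against. Judged on its own terms, the formal half of your proposal is fine: given a flip-invariant measure $\mu$ on a suitable $\sigma$-algebra of halfspaces for which every $\mc{H}(x\mid y)$ is measurable and $\mu(\mc{H}(x\mid y))$ is finite, your verification that $\rho(x,y)=\mu(\mc{H}(x\mid y))$ is a pseudo-metric satisfying $\rho(x,y)=\rho(x,p)+\rho(p,y)$ for $p=m(x,y,z)$ is correct --- the decomposition $\mc{H}(x\mid y)=\mc{H}(x\mid p)\sqcup\mc{H}(p\mid y)$ for $p\in[x,y]$ is exactly the standard median/measured-walls computation of Chatterji--Dru\c{t}u--Haglund. (You do gloss over the measurability issues, which in \cite{CDH} require genuine care.)

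The genuine gap is that the construction of $\mu$ is the entire content of the proposition, and you do not carry it out; you only list the properties it must have. Worse, by the duality between median metrics and spaces with measured walls, ``there exists a flip-invariant measure on the halfspaces of $(\Om,m)$ with $\mu(\mc{H}(x\mid y))$ two-sidedly comparable to $d(x,y)$'' is essentially a restatement of the conclusion ``there exists a median metric bi-Lipschitz to $d$ inducing $m$,'' so the reduction is close to circular rather than a genuine step forward. Your heuristics for the two bounds also presuppose the measure (``the total measure of walls crossed along the geodesic,'' ``a definite amount of $\mu$-measure must lie between the points'') instead of producing it, and the abstract halfspace poset of a median algebra carries no canonical measure to normalise. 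Zeidler's actual argument avoids this chicken-and-egg problem by building $\rho$ directly from the metric $d$ and the operator $m$ (working inside intervals and using the Lipschitz constant together with the finite-rank hypothesis to obtain the upper bi-Lipschitz bound), rather than passing through walls; to complete your route you would have to either reproduce such a direct construction of a compatible metric and then dualise --- at which point the walls are redundant --- or give an independent construction of the measure, which is not sketched here.
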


\subsection{Coarse median spaces}
In this section we recall the definition of coarse median space and related notions, and establish basic properties that will be useful later on.

Let $X$ be a metric space. For $x,y\in X$ and $D\geq 0$, we write ``$x\approx_Dy$'' with the meaning of ``$d(x,y)\leq D$''. A \emph{$C$--coarse median} on $X$, for some constant $C\geq 1$, is a map $\mu\colon X^3\ra X$ satisfying the following identities for all points $x,y,z,p\in X$:
\begin{enumerate}
\item[(M0)] symmetry and localisation: $\mu(x,y,z)=\mu(y,x,z)=\mu(x,z,y)$ and $\mu(x,x,y)=x$;
\item[(CM1)] coarse associativity: $\mu(\mu(x,p,y),p,z)\approx_C\mu(x,p,\mu(y,p,z))$;
\item[(CM2)] coarse Lipschitz property: $d(\mu(x,y,z),\mu(p,y,z))\leq Cd(x,p)+C$.
\end{enumerate}
We say that $\mu$ is a \emph{coarse median} if it is a $C$--coarse median for some $C\geq 1$. The pair $(X,\mu)$ is then a \emph{coarse median space}, or a $C$--coarse median space if we wish to specify the constant. (See \cite{NWZ1} for the equivalence between the above definition and Bowditch's original one in \cite{Bow-coarsemedian}.)

Two coarse medians $\mu,\nu\colon X^3\ra X$ are \emph{$D$--close}, for a constant $D\geq 0$, if we have $\mu(x,y,z)\approx_D\nu(x,y,z)$ for all points $x,y,z\in X$. We simply say that $\mu$ and $\nu$ are \emph{close} if we do not wish to specify the constant, and note that closeness is an equivalence relation. We refer to each equivalence class of coarse medians on $X$, denoted $[\mu]$, as a \emph{coarse median structure}.

There is a notion of \emph{rank} for coarse median spaces extending the analogous notion for median algebras. Roughly, $(X,\mu)$ has rank $\leq r$ if all its finite subsets can be approximated by subsets of median algebras with rank $\leq r$. The exact definition is a bit more involved and will never be needed explicitly, so we refer to \cite[Section~8]{Bow-coarsemedian}.

In a coarse median space $(X,\mu)$, the \emph{interval} between two points $x,y\in X$ is the set
\[ [x,y]_{\mu}:=\{\mu(x,y,z) \mid z\in X \} . \]
We simply denote this by $[x,y]$ when there is no ambiguity on the coarse median under consideration. There is a natural map $\pi_{x,y}\colon X\ra[x,y]$ given by $\pi_{x,y}(z)=\mu(x,y,z)$, and we refer to it as the \emph{projection} to the interval. A subset $A\sq X$ is \emph{$D$--quasiconvex}, for some $D\geq 0$, if the interval $[a_1,a_2]$ is contained in the $D$--neighbourhood of $A$ for all points $a_1,a_2\in A$. Intervals are uniformly quasiconvex, see \Cref{lem:cm_basics}(4).

\begin{rmk}
Whenever we use the term \emph{quasiconvex} in the sequel, this will always be meant with respect to a fixed coarse median structure. We will never consider quasiconvexity in a metric sense, which would amount to requiring geodesics (or quasigeodesics) with endpoints in the set to remain close to it. Note that the only general connection between coarse median intervals and quasigeodesics is the rather weak one provided by \Cref{lem:quasi-geodesics_in_intervals} below. We refer to \cite[Theorem~5.1]{NWZ1} for an example where geodesics stray arbitrarily far from the interval between their endpoints.
\end{rmk}

For a subset $A\sq X$ and $R\geq 0$, we denote by $\mc{N}_R(A)$ the (closed) $R$--neighbourhood of $A$ in $X$. The following lemma collects various basic facts and identities that will be useful in the sequel.

\begin{lem}\label{lem:cm_basics}
Let $(X,\mu)$ be a $C$--coarse median space. There is $K=K(C)\geq 1$ such that the following hold.
\begin{enumerate}
\item For all $a,b,c,d,e\in X$, we have $\mu(\mu(a,b,c),d,e)\approx_K\mu(\mu(a,d,e),\mu(b,d,e),c)$.
\item Given $x,y\in X$, $p\in [x,y]$ and points $x'\in[x,p]$, $y'\in[p,y]$, we have $\mu(x',y',p)\approx_K p$.
\item For any $x,y,z\in X$, we have $\mu(x,y,z)\in [x,y]\cap [y,z]\cap [z,x]$. For any $R\geq 0$, the intersection 
\[ \mc{N}_R([x,y])\cap\mc{N}_R([y,z])\cap\mc{N}_R([z,x]) \] 
has diameter at most $KR+K$.
\item If $x,y\in X$ and $a,b\in\mc{N}_R([x,y])$ for some $R\geq 0$, then the interval $[a,b]$ is contained in the neighbourhood of $[x,y]$ of radius  $KR+K$.
\item For $x,y,z\in X$ and $R\geq 0$, consider three points $a\in\mc{N}_R([x,y])\cap\mc{N}_R([x,z])$, $b\in\mc{N}_R([y,z])\cap\mc{N}_R([y,x])$ and $c\in\mc{N}_R([z,x])\cap\mc{N}_R([z,y])$. Then $\mu(a,b,c)$ and $\mu(x,y,z)$ are at distance $\leq KR+K$.
\item If $A\sq X$ is a $D$--quasiconvex subset and $B\sq X$ is a set with $d_{\rm Haus}(A,B)\leq D$, for some $D\geq 0$, then $B$ is $2(CD+C+D)$--quasiconvex.
\end{enumerate}
\end{lem}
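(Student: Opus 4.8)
The plan is to separate the six statements into two families according to the mechanism used to prove them. Items (1) and (2), together with the ``single point'' cores of (3), (4) and (5), are instances of identities or inclusions that hold in \emph{every} median algebra; I would transport them to the coarse setting using Bowditch's principle that any finite subset of a coarse median space lies at uniformly bounded Hausdorff distance from a finite median algebra whose median operator approximates $\mu$ (see \cite{Bow-coarsemedian}; each may alternatively be derived directly from (M0), (CM1), (CM2)). By contrast, items (4) and (6) and the neighbourhood estimates in (3) and (5) are obtained by absorbing the error radii $R$ or $D$ through repeated application of the coarse Lipschitz inequality (CM2). Since every constant produced this way depends only on $C$, taking $K$ to be their maximum proves the lemma.

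For (1) I would verify the distributivity identity $m(m(a,b,c),d,e)=m(m(a,d,e),m(b,d,e),c)$ in a finite median algebra (a wall-by-wall check) and apply the approximation to the six points involved. Item (2) is the wall-by-wall observation that, when $p\in[x,y]$, $x'\in[x,p]$ and $y'\in[p,y]$, every halfspace places the majority of $\{x',y',p\}$ on the same side as $p$, so $m(x',y',p)=p$; its coarse version follows identically. Along the way I will record two elementary consequences that recur throughout: the projection $\pi_{x,y}=\mu(x,y,\cdot)$ is coarsely idempotent, so $p\in[x,y]$ forces $\mu(x,y,p)\approx_{O(1)}p$ (the identity $m(x,y,m(x,y,s))=m(x,y,s)$, transported coarsely); and, combining this with (CM2) applied to a genuine nearest point, $\pi_{x,y}$ is a coarse nearest-point projection, $d(w,\mu(x,y,w))\le O(d(w,[x,y]))$.

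For (3), the membership $\mu(x,y,z)\in[x,y]\cap[y,z]\cap[z,x]$ is immediate from the definition of interval and (M0). For the diameter bound I would pass to a finite median algebra approximating $\mu$ on the relevant points: there the exact fact $[x,y]\cap[y,z]\cap[z,x]=\{m(x,y,z)\}$ (a point in all three intervals lies on the majority side of every halfspace) together with honest Lipschitz nearest-point projections shows that any $w$ lying $O(R)$-close to all three intervals is $O(R)$-close to $\mu(x,y,z)$, so two such points are $O(R)$ apart. Items (4) and (5) follow the same template: replace the near points $a,b$ (resp.\ $a,b,c$) by genuine points of the relevant intervals using $d\le R$, invoke the median-algebra facts that $[a',b']\sq[x,y]$ when $a',b'\in[x,y]$ (convexity) and that $m(a',b',c')=m(x,y,z)$ when $a'\in[x,m],\,b'\in[y,m],\,c'\in[z,m]$ with $m=m(x,y,z)$, and absorb the replacement error through (CM2).

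Finally, (6) is a direct estimate requiring no median model. Given $b_1,b_2\in B$, choose $a_i\in A$ with $d(a_i,b_i)\le D$; for any $w=\mu(b_1,b_2,s)\in[b_1,b_2]$, two applications of (CM2) give $w\approx_{2(CD+C)}\mu(a_1,a_2,s)\in[a_1,a_2]\sq\mc{N}_D(A)\sq\mc{N}_{2D}(B)$, whence $w\in\mc{N}_{2(CD+C+D)}(B)$, exactly the claimed constant. The one genuinely non-formal step, and the part I expect to be the main obstacle, is the nearest-point/idempotency property of $\pi_{x,y}$ underlying (3): this is where one converts ``coarse membership in an interval'' into ``coarsely fixed by the projection'', and it is what makes the triple intersection of the intervals coarsely a single point rather than merely bounded by the triangle inequality.
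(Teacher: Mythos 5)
Your proposal is correct, and it is the same in substance as the paper's argument --- both rest on the fact that each item is a median-algebra identity or inclusion, coarsified, plus bookkeeping with (CM2) --- but it is organized differently. The paper simply cites (1)--(3) from Niblo--Wright--Zhang (these references carry out exactly the transport you describe, via the equivalence of the (M0)/(CM1)/(CM2) axioms with Bowditch's finite-approximation definition, which is the one point your argument silently relies on and which the paper records in Section 2), then deduces (4) by a direct computation from item (1) and deduces (5) by chaining (4) with (3); your route to (5) instead transports the identity $m(a',b',c')=m(x,y,z)$ for $a'\in[x,m]$, $b'\in[y,m]$, $c'\in[z,m]$. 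That works, but note it hides one extra step: your hypothesis gives a point $R$--close to $[x,y]$ and $R$--close to $[x,z]$ \emph{separately}, and you must still manufacture a single point of $[x,\mu(x,y,z)]$ nearby (e.g.\ take $a'=\mu(x,\mu(x,y,z),a)$ and transport the identity $m(x,m(x,y,z),m(x,z,m(x,y,a)))=m(x,z,m(x,y,a))$ to see it is $O(R)+O(1)$--close to $a$); the paper's derivation of (5) from (3) and (4) avoids this. Your coarse nearest-point/idempotency claim for $\pi_{x,y}$, which you correctly flag as the crux of (3), does follow in one line from (CM1) with $p=a$ (namely $\mu(a,b,\mu(a,b,c))\approx_C\mu(a,b,c)$), so it is not an obstacle. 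Your proof of (6), including the constant $2(CD+C+D)$, is identical to the paper's.
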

\begin{proof}
Part~(1) follows from the main theorem of \cite{NWZ1}, see e.g.\ \cite[Section~2.3]{NWZ2}. Parts~(2) and~(3) were shown in \cite{NWZ2}: they follow, respectively, from a double application of Lemma~3.6 and from Proposition~3.2(I3) in that article.

We now prove part~(4). Suppose first that $R=0$. In this case we have $a,b\in[x,y]$, and hence $\mu(x,y,a)\approx_Ca$ and $\mu(x,y,b)\approx_Cb$, by identity (CM1). Given $p\in[a,b]$, we can write $p=\mu(a,b,p')$ for some $p'\in X$ and, using part~(1) and identity (CM2), we obtain
\[ \mu(\mu(a,b,p'),x,y)\approx_K\mu(\mu(a,x,y),\mu(b,x,y),p')\approx_{2C^2+2C} \mu(a,b,p')=p. \]
This shows that $[a,b]$ is contained in the neighbourhood of $[x,y]$ of radius $K+2C^2+2C$, as required. For the general case, it suffices to consider points $a',b'\in[x,y]$ at distance $\leq R$ from $a$ and $b$, respectively. The interval $[a,b]$ is then contained in the $2(CR+R)$--neighbourhood of $[a',b']$, by a double application of identity (CM2), and $[a',b']$ is contained in the $(K+2C^2+2C)$--neighbourhood of $[x,y]$ by the previous discussion.

Regarding part~(5), the point $\mu(a,b,c)$ lies in the intersection $[a,b]\cap[b,c]\cap[c,a]$. By part~(4), the latter is contained in $\mc{N}_{R'}([x,y])\cap\mc{N}_{R'}([y,z])\cap\mc{N}_{R'}([z,x])$ for $R'=KR+K$. By part~(3), the latter intersection has diameter $\leq KR'+K$ and also contains the point $\mu(x,y,z)$. In conclusion, the distance of $\mu(a,b,c)$ and $\mu(x,y,z)$ is at most $K^2R+K^2+K$, as desired.

Finally, we address part~(6). Consider two points $b_1,b_2\in B$ and let $a_1,a_2\in A$ be points with $d(a_i,b_i)\leq D$. By identity~(CM2), the interval $[b_1,b_2]$ is contained in the $2(CD+C)$--neighbourhood of the interval $[a_1,a_2]$. The latter is contained in $\mc{N}_D(A)\sq\mc{N}_{2D}(B)$, hence we obtain the desired conclusion.
\end{proof}

\subsection{Ultralimits}\label{sub:ultralimits}

Let $\om$ be a non-principal ultrafilter. Given a sequence of metric spaces $(X_n,d_n)$ and basepoints $p_n\in X$, we can consider the set of all sequences $x_n\in X_n$ such that $\lim_{\om} d_n(x_n,p_n)<+\infty$, and endow this set with the pseudo-metric 
\[ d_{\om}\big((x_n),(x_n')\big):=\lim_{\om}d_n(x_n,x_n') .\]
We denote by $(X_{\om},d_{\om})$ the quotient metric space of this pseudo-metric space, which is known as the \emph{$\om$--limit} (or \emph{ultralimit}) of the sequence of pointed metric spaces $(X_n,p_n)$. We think of a sequence of points $x_n\in X_n$ as converging to the point $(x_n)\in X_{\om}$. We refer to \cite[Chapter~10]{DrutuKapovich} for generalities on ultralimits and ultrafilters.

To any sequence of subsets $A_n\sq X_n$, we can associate the set $A_{\om}\sq X_{\om}$ formed by all points of the form $(a_n)$ with $a_n\in A_n$ for all $n$ (or, equivalently, only those with $a_n\in A_n$ for $\om$--all $n$). Following \cite[Section~10]{Bow-quasiflats}, we say that $A_{\om}$ is the \emph{strong limit} of the $A_n$ and write $A_n\ra A_{\om}$. If $\mscr{A}_n$ are families of subsets of $X_n$, we also say that $A_{\om}$ is a \emph{strong limit} of the $\mscr{A}_n$ if there exist sets $A_n\in\mscr{A}_n$ with $A_n\ra A_{\om}$. Finally, a set $A\sq X_{\om}$ is a \emph{weak limit} of the $\mscr{A}_n$ if each of its bounded subsets is contained in a strong limit of the $\mscr{A}_n$.

If $\mu_n\colon X_n^3\ra X_n$ is a $C_n$--coarse median and $C_{\om}:=\lim_{\om}C_n<+\infty$, then we obtain a $C_{\om}$--coarse median $\mu_{\om}\colon X_{\om}^3\ra X_{\om}$, defined (with an abuse) by the formula
\[ \mu_{\om}\big((x_n),(y_n),(z_n)\big)=\big(\mu_n(x_n,y_n,z_n)\big) .\]
To be precise, one should choose specific sequences $x_n',y_n',z_n'$ representing the points $(x_n),(y_n),(z_n)$ and define the coarse median as the point $\big(\mu_n(x_n',y_n',z_n')\big)$. Choosing different sequences $x_n',y_n',z_n'$ can alter the definition of the map $\mu_{\om}$, because of the additive error in identity~(CM2). However, independently of the choices, we obtain a $C_{\om}$--coarse median on $X_{\om}$, and different choices will yield a $C_{\om}$--coarse median that is $3C_{\om}$--close to it. Thus, the corresponding coarse median \emph{structure} on $X_{\om}$ is unaffected by all choices.

We have a particular kind of ultralimit when the metric spaces $(X_n,d_n)$ are rescaled copies of a fixed metric space $(X,d)$, namely $(X,\lambda_nd)$ for a sequence $\lambda_n\ra 0$. In this case, the ultralimit is known as an \emph{asymptotic cone} of $X$. Note that asymptotic cones still depend on the choice of basepoints and ultrafilter, in general. It is important to observe that asymptotic cones of coarse median spaces actually inherit a uniquely-defined structure of median algebra, with a Lipschitz median operator. Indeed, since $\lambda_n\ra 0$, the additive errors in identities (CM1) and (CM2) disappear in the limit.

Ultralimits are well-behaved with respect to the notion of rank. An ultralimit of coarse median spaces of rank $\leq r$ is again a coarse median space of rank $\leq r$. All asymptotic cones of a coarse median space of rank $\leq r$ are median algebras of rank $\leq r$, see \cite[Proposition~9.3]{Bow-coarsemedian}.

We will also need a result of Bowditch that can be used to compare two families of subsets of a coarse median space, based on the behaviour of their strong limits in all asymptotic cones (\Cref{lem:Bowditch} below). In order to state this, we need to introduce some of Bowditch's notation. (In the original application, roughly, $\mc{E}$ is the collection of all quasiflats in $X$, $\mc{F}$ are certain coarsely cubulated subspaces, $\mc{E}^{\infty}(X^{\infty})$ are bilipschitz flats in the asymptotic cone, and $\mc{F}^{\infty}(X^{\infty})$ are certain cubulated subspaces.)

Let $(X,\mu)$ be a coarse median space and let $\mc{E},\mc{F}$ be two families of subsets of $X$. For each asymptotic cone $X^{\infty}$ of $X$, let $\mc{E}^{\infty}(X^{\infty})$ and $\mc{F}^{\infty}(X^{\infty})$ be two families of subsets of $X^{\infty}$. We assume that the following conditions are satisfied for all asymptotic cones $X^{\infty}$ of $X$ (these are a slightly simplified version of Bowditch's conditions in \cite[Section~10]{Bow-quasiflats}, which suffices for our purposes).
\begin{enumerate}
\item[(E1)] All strong limits of $\mc{E}$ lie in $\mc{E}^{\infty}(X^{\infty})$, and all strong limits of $\mc{F}$ lie in $\mc{F}^{\infty}(X^{\infty})$. All elements of $\mc{F}^{\infty}(X^{\infty})$ are weak limits of $\mc{F}$.
\item[(E2)] Each element of $\mc{E}^{\infty}(X^{\infty})$ is a subset of an element of $\mc{F}^{\infty}(X^{\infty})$.
\item[(E3)] If some $E\in\mc{E}^{\infty}(X^{\infty})$ is contained in a bounded neighbourhood of some $F\in\mc{F}^{\infty}(X^{\infty})$, then $E\sq F$.
\end{enumerate}
We will use the following result.

\begin{lem}[\cite{Bow-quasiflats}, Lemma~10.5]
\label{lem:Bowditch}
For every $C\geq 1$, there exists a constant $R=R(C)$ such that the following holds. 
Let $(X,\mu)$ be a $C$--coarse median space, with $X$ geodesic. 
Let $\mc{E},\mc{F},\mc{E}^{\infty}(\cdot),\mc{F}^{\infty}(\cdot)$ be families of subsets satisfying conditions (E1)--(E3) above. Then each bounded subset of each element of $\mc{E}$ is contained in the $R$--neighbourhood of an element of $\mc{F}$.
\end{lem}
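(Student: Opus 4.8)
The plan is to argue by contradiction, using asymptotic cones as the bridge between the hypotheses (E1)--(E3), which live in the cones, and the desired metric conclusion in $X$. Suppose no $R=R(C)$ works for a given $C$--coarse median space $(X,\mu)$. Then for every $n\in\N$ there are an element $E_n\in\mc{E}$ and a bounded subset $B_n\sq E_n$ such that $B_n\not\sq\mc{N}_n(F)$ for every $F\in\mc{F}$. I would choose basepoints $p_n\in B_n$ and rescaling factors $\lambda_n\ra 0$ and pass to the asymptotic cone $X^{\infty}=\lim_{\om}(X,\lambda_n d,p_n)$, where $\lambda_n$ is chosen (adaptively in $n$) so that the rescaled sets $\lambda_n B_n$ have uniformly bounded diameter while $\lambda_n n\ra\infty$; this is the step where the interplay between the failure radius $n$ and $\diam(B_n)$ has to be managed (see below). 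The rescaled $B_n$ then have a strong limit $\bar B\sq X^{\infty}$, and since $B_n\sq E_n\in\mc{E}$, the set $\bar B$ is contained in a strong limit $E^{\infty}$ of $\mc{E}$.

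Now the hypotheses feed in. By (E1), $E^{\infty}\in\mc{E}^{\infty}(X^{\infty})$; by (E2), $E^{\infty}\sq F^{\infty}$ for some $F^{\infty}\in\mc{F}^{\infty}(X^{\infty})$; and again by (E1), $F^{\infty}$ is a \emph{weak} limit of $\mc{F}$. Because $\bar B\sq E^{\infty}\sq F^{\infty}$ is bounded, the weak-limit property produces a single strong limit $\lim_{\om}F_n'$ of elements $F_n'\in\mc{F}$ with $\bar B\sq\lim_{\om}F_n'$. The standard fact that containment of a bounded strong limit inside another strong limit forces the one-sided Hausdorff distance of the approximating sequences to vanish then gives $\lim_{\om}\sup_{b\in B_n}\lambda_n d(b,F_n')=0$; equivalently $B_n\sq\mc{N}_{o(1/\lambda_n)}(F_n')$. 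Since $\lambda_n n\ra\infty$, i.e.\ $1/\lambda_n=o(n)$, this yields $B_n\sq\mc{N}_n(F_n')$ for $\om$--almost all $n$, contradicting the choice of $B_n$. Condition (E3) enters to guarantee that the containment $E^{\infty}\sq F^{\infty}$ is genuine (rather than merely coarse), so that the bounded piece $\bar B$ really does sit inside a single $F^{\infty}$ and the pullback lands in a single $F_n'$.

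The pointwise special case --- that there is $R$ with $d(e,\mc{F})\leq R$ for every point $e$ of every $E\in\mc{E}$ --- is the transparent heart of the argument: one takes $B_n=\{e_n\}$ a single far point, scales by $\lambda_n=1/n$, and the chain above goes through verbatim with $\bar B$ a single point. The main obstacle is upgrading this to the set-level statement, i.e.\ arranging the scaling so that the \emph{entire} failing set $B_n$ is visible in the cone as a bounded set (so that the weak-limit property applies to all of it simultaneously and pulls back to one $F_n'$). The difficulty is that the negation gives no a priori control of $\diam(B_n)$ relative to $n$: if $\diam(B_n)\gg n$ one cannot simultaneously keep $\lambda_n B_n$ bounded and $\lambda_n n\ra\infty$. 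Resolving this requires either reducing to counterexamples whose diameter is comparable to the failure radius $n$ (e.g.\ by intersecting $E_n$ with a ball of radius $\asymp n$ and checking the failure persists on such a piece), or exploiting condition (E3) in the cone to propagate the pointwise closeness to closeness to a single $F^{\infty}$; this is the step I expect to be the most delicate.
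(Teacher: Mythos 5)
First, a point of comparison: the paper does not prove this lemma at all --- it is imported directly from Bowditch (\cite{Bow-quasiflats}, Lemma~10.5), with (E1)--(E3) a slightly simplified version of his hypotheses. So there is no in-paper argument to measure yours against; the question is only whether your outline stands on its own, and it does not quite. The overall strategy (contradiction, rescaled ultralimits, (E1)/(E2) plus the weak-limit property to extract approximating elements $F_n'\in\mc{F}$) is the natural first attack, and your pointwise special case is correct as written: scaling by $1/n$ at a single far point, pushing it into $E^\infty\sq F^\infty$, and pulling back through a strong limit does yield $d(e_n,F_n')=o(n)$ and hence a contradiction.

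The gap is the one you flag yourself at the end, and it is genuine rather than a technicality. The negation gives $B_n\sq E_n$ with $r(B_n):=\inf_{F\in\mc{F}}\sup_{b\in B_n}d(b,F)>n$ but no control on $d_n:=\diam(B_n)$, and both available rescalings fail in the regime $n=o(d_n)$. Scaling by $1/d_n$ keeps $\bar B$ bounded and (via the weak-limit property plus near-extremal points) yields $B_n\sq\mc{N}_{\eps_nd_n}(F_n')$ with $\eps_n\to_{\om}0$; but this contradicts $r(B_n)>n$ only if $\eps_n\leq n/d_n$, and both quantities tend to $0$ with no comparison between them. Scaling by $1/n$ blows the failure radius up but makes $B_n$ unbounded in the cone, so the weak-limit property no longer produces a single $F_n'$ covering all of it. Your first proposed repair (intersecting $E_n$ with a ball of radius comparable to $n$) does not obviously preserve the failure: each such piece may individually be $n$-close to some element of $\mc{F}$, with different elements for different pieces, and gluing these local approximants into one element is exactly the content of the lemma. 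Your second suggestion --- that (E3) is the missing ingredient --- is the right instinct, but note that the role you currently assign to (E3) is redundant: (E2) already gives genuine containment $E^\infty\sq F^\infty$. (E3) must instead be what converts the merely coarse information available in the cone (that a limit of $\mc{E}$ lies in a bounded neighbourhood of a limit of the $F_n'$) into exact containment in a single element of $\mc{F}^\infty$ whose bounded pieces can be pulled back; making that step precise is the substance of Bowditch's proof and is absent from your outline.
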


\subsection{Quasiflats in coarse median spaces}

In this subsection we establish a criterion for a quasiflat in a coarse median space to be quasiconvex.

For simplicity, given a constant $C\geq 1$, we refer to $(C,C)$--quasi-isometric embeddings and $(C,C)$--quasi-isometries simply as $C$--quasi-isometric embeddings and $C$--quasi-isometries. We will need the following classical result; see e.g.\ \cite[Corollary~2.6]{KapovichLeeb97} for a proof. 

\begin{prop}\label{prop:Fuerstenberg}
There exists a constant $D=D(C,p)$ such that, for any $C$--quasi-isometric embedding $f\colon\R^p\ra\R^p$, the image of $f$ is $D$--dense in the codomain.
\end{prop}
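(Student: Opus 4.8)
The statement to prove is Proposition~\ref{prop:Fuerstenberg}: any $C$--quasi-isometric embedding $f\colon\R^p\to\R^p$ has $D$--dense image for some $D=D(C,p)$.

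\medskip

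The plan is to argue by contradiction using a degree-theoretic (or covering-space) obstruction, exploiting the fact that a quasi-isometric embedding of $\R^p$ into itself is a proper map whose behaviour at infinity is controlled. First I would observe that $f$ is proper: since $f$ is a $(C,C)$--quasi-isometric embedding, we have $d(f(x),f(y))\geq \tfrac{1}{C}d(x,y)-C$, so preimages of bounded sets are bounded, and in particular $\|f(x)\|\to\infty$ as $\|x\|\to\infty$. After replacing $f$ by a continuous map at bounded distance (e.g.\ by a standard mollification or a simplicial approximation on a triangulation of $\R^p$, which changes the image by a bounded amount and hence does not affect $D$--density up to adjusting $D$), I may assume $f$ is continuous and still proper with the same quasi-isometry constants up to a fixed additive error.

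\medskip

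The key step is to show that such a proper continuous map is \emph{surjective}, from which $D$--density of the original $f$ follows immediately (with $D$ absorbing the bounded perturbation). The cleanest route is through the induced map on one-point compactifications: a proper continuous map $f\colon\R^p\to\R^p$ extends to a continuous map $\bar f\colon S^p\to S^p$ of the one-point compactification $S^p=\R^p\cup\{\infty\}$, fixing $\infty$. The quasi-isometric embedding condition forces $\bar f$ to be a local homeomorphism near $\infty$, or at least to have nonzero topological degree: intuitively, $f$ coarsely preserves the radial direction at infinity because the lower Lipschitz bound prevents the image from collapsing. I would compute that $\deg(\bar f)=\pm 1$ by a homotopy argument, deforming $f$ at infinity to a genuine homeomorphism (for instance, showing $f$ is properly homotopic to the identity, or at least to a map of degree $\pm 1$, by linearly interpolating $tf(x)+(1-t)x$ scaled appropriately and checking properness is preserved along the homotopy using the two-sided distance bounds). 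A continuous map of $S^p$ of nonzero degree is surjective onto $S^p$, hence $f$ is surjective onto $\R^p$.

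\medskip

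The main obstacle I expect is controlling the homotopy at infinity carefully enough to pin down the degree: the naive straight-line homotopy $H_t(x)=(1-t)x+tf(x)$ need not be proper, since cancellation could in principle send a point to the origin along the way, so I would instead work on large spheres $\partial B(0,\rho)$ and use that $f$ maps $\partial B(0,\rho)$ into the annulus $\{y : \tfrac{1}{C}\rho-C\leq \|y\|\leq C\rho+C\}$, which is disjoint from the origin for $\rho$ large; this lets me define a map $S^{p-1}\to S^{p-1}$ by radial projection and argue its degree is nonzero and independent of $\rho$, giving $\deg\bar f\neq 0$. An alternative that sidesteps the homotopy entirely is to invoke that a proper map $\R^p\to\R^p$ which is a quasi-isometric embedding induces a \emph{$\pi_{p-1}$--injection} (equivalently an $\check{H}^{p-1}$--isomorphism) at infinity, so that if the image missed a ball then the complement of that ball in the target would retract onto a sphere linked by the image, contradicting the control on ends of $\R^p$; this is essentially the content of the reference \cite{KapovichLeeb97} cited in the excerpt. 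Either way, once surjectivity (equivalently, that no large ball is disjoint from the image) is established, the reduction to $D$--density of the original map is routine.
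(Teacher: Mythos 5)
The paper does not prove this proposition at all: it is quoted from the literature (Kapovich--Leeb, as you note), so your sketch has to stand on its own. The overall degree-theoretic strategy is a standard and viable route, and several of your reductions are fine: replacing $f$ by a continuous map at uniformly bounded distance, observing properness, extending to $\bar f\colon S^p\to S^p$, and noting that nonzero degree forces surjectivity of the continuous approximation (hence $D$--density of $f$ with $D=D(C,p)$, since the approximation error is uniform). You also correctly flag the danger with the straight-line homotopy.

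The genuine gap is that the central claim, $\deg(\bar f)\neq 0$, is asserted rather than proved, and the justifications you offer do not work. The map $\bar f$ need not be a local homeomorphism near $\infty$ (a quasi-isometric embedding need not be locally injective anywhere), and $f$ need not be properly homotopic to the identity: for $f=-\mathrm{id}$ the interpolation $(1-t)x+tf(x)$ collapses all of $\R^p$ to the origin at $t=1/2$, and for $p$ odd $\deg(\overline{-\mathrm{id}})=-1$, so no homotopy to the identity exists at all. Your fallback --- restricting to large spheres $\partial B(0,\rho)$ and radially projecting to get $h_\rho\colon S^{p-1}\to S^{p-1}$ --- correctly reduces the problem to showing $\deg(h_\rho)\neq 0$, but that is exactly the content of the proposition and no argument is given (a continuous map can wind with degree zero around a point it never hits, so coarse injectivity alone does not obviously rule this out). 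One way to close the gap: rescale, setting $g_\rho(x)=\rho^{-1}\bigl(g(\rho x)-g(0)\bigr)$ on $\overline{B(0,1)}$; the coarse bi-Lipschitz bounds become genuine bi-Lipschitz bounds up to error $O(1/\rho)$, so an Arzel\`a--Ascoli argument extracts a uniform limit $G$ that is an honest bi-Lipschitz embedding; invariance of domain gives $\deg(G,B(0,1),0)=\pm1$, and stability of the Brouwer degree under uniform perturbation transfers this back to $g_\rho$ for large $\rho$, hence to $\bar g$. Without this (or the \v{C}ech-cohomology argument of the cited reference), the proof is incomplete; and simply ``invoking the reference'' at the key step is not a proof but a restatement of the citation the paper already makes.
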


We will use the proposition in the following form. (Recall that quasiconvexity is always meant in the coarse median sense, and not in a metric sense.) 

\begin{cor}\label{cor:Fuerstenberg}
For all $C\geq 0$ and $p\in\N$, there exists a constant $C_1=C_1(C,p)$ such that the following holds. Let $(X,\mu)$ be a $C$--coarse median space and let $E\sq X$ be a subset $C$--quasi-isometric to $\R^p$. Suppose that, for every bounded subset $B\sq E$, we have $B\sq\mc{N}_C(Y_B)$ for a $C$--quasiconvex subset $Y_B\sq X$ such that $Y_B$ embeds $C$--quasi-isometrically in $\R^p$. Then $E$ is $C_1$--quasiconvex.
\end{cor}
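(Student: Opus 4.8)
The plan is to prove quasiconvexity of $E$ one point at a time: I fix $a_1,a_2\in E$ and a point $q\in[a_1,a_2]$, and aim to bound $d(q,E)$ by a constant depending only on $C$ and $p$. The first step is to record that $q$ lies uniformly close to one of the model flats $Y_B$. Indeed, pick any bounded $B\sq E$ with $a_1,a_2\in B$, choose $y_i\in Y_B$ with $d(a_i,y_i)\le C$, and note that $[y_1,y_2]\sq\mc{N}_C(Y_B)$ by $C$--quasiconvexity of $Y_B$. Since $a_i\in\mc{N}_C([y_1,y_2])$, \Cref{lem:cm_basics}(4) gives $[a_1,a_2]\sq\mc{N}_{C'}(Y_B)$ with $C'=C'(C)$, so there is $y\in Y_B$ with $d(q,y)\le C'$. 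It therefore suffices to bound $d(y,E)$, and I am free to enlarge $B$ (which only changes $Y_B$) as the argument requires.

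The core of the argument is a Furstenberg-type density comparison between the two copies of $\R^p$ modelled by $E$ and by $Y_B$. Fix a $C$--quasi-isometry $f\colon E\ra\R^p$ with coarse inverse $\bar f\colon\R^p\ra E$, and a $C$--quasi-isometry $g\colon Y_B\ra\R^p$. Now take $B$ to be the trace on $E$ of a large metric ball of radius $N$ about $a_1$, so that $f(B)$ contains a Euclidean ball of radius comparable to $N$ about $f(a_1)$. On $B$ the hypothesis provides a coarse projection $\rho\colon B\ra Y_B$ with $d(b,\rho(b))\le C$, and the composite $\Psi:=g\o\rho\o\bar f$ is a $C''$--quasi-isometric embedding of a large Euclidean ball about $f(a_1)$ into $\R^p$, with $C''=C''(C)$. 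By \Cref{prop:Fuerstenberg} in its ball form, the image of $\Psi$—which coarsely equals $g(\rho(B))$—is $D$--dense in a Euclidean ball about $g(\rho(a_1))$ of radius comparable to $N$, where $D=D(C,p)$.

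To finish, observe that $d(y,\rho(a_1))\le d(y,q)+d(q,a_1)+d(a_1,\rho(a_1))\le C'+d(q,a_1)+C$, so $g(y)$ lies within a bounded distance (in terms of $d(q,a_1)$) of $g(\rho(a_1))$. Choosing $N$ large enough—depending on $d(q,a_1)$—forces $g(y)$ into this Furstenberg ball, whence $g(y)\approx_D g(\rho(b))$ for some $b\in B\sq E$. Since $g$ is a $C$--quasi-isometric embedding, this yields $d(y,\rho(b))\le C(D+C)$ and hence $d(y,E)\le d(y,\rho(b))+d(\rho(b),b)\le CD+C^2+C$. Therefore $d(q,E)\le C'+CD+C^2+C=:C_1$. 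The decisive point is that, although $N$ must be taken large (depending on $d(q,a_1)$), enlarging $B$ only enlarges the Furstenberg ball and never enters the final estimate; thus $C_1=C_1(C,p)$ is uniform and $E$ is $C_1$--quasiconvex.

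I expect the main obstacle to be conceptual rather than computational: the coarse projection $\rho$ is only available on a bounded piece $B$ (off $B$ the flat $E$ need not remain near $Y_B$), so the comparison must be localised and one must invoke \Cref{prop:Fuerstenberg} in a ball version rather than globally. This ball version follows from the stated one by extending the quasi-isometric embedding of the ball to a global quasi-isometric self-embedding of $\R^p$ (with constants depending only on $C$ and $p$), or directly from the Kapovich--Leeb argument underlying \Cref{prop:Fuerstenberg}. Granting this, the heart of the matter is simply that a quasi-isometric self-embedding of $\R^p$ is coarsely surjective, which forces the portion of $Y_B$ lying over the chunk $B$ to be coarsely contained in $E$.
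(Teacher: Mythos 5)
Your strategy is genuinely different from the paper's, and most of it is sound. The paper first treats the case where a \emph{single} $C$--quasiconvex set $Y$ satisfies $E\sq\mc{N}_C(Y)$ globally: there \Cref{prop:Fuerstenberg} applies verbatim to the composite $E\ra Y\ra\R^p$, giving that $E$ and $Y$ are at bounded Hausdorff distance, whence quasiconvexity of $E$ via \Cref{lem:cm_basics}(6). It then reduces the general (local) hypothesis to this global one by an \emph{unrescaled} ultralimit: a putative sequence of worse and worse failures of quasiconvexity, together with the sets $Y_n=Y_{E\cap\mc{N}_n(x)}$, limits to a configuration where $E_{\om}\sq\mc{N}_C(Y_{\om})$ holds globally. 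You instead localise everything: your first step (that any $q\in[a_1,a_2]$ lies within $C'=C'(C)$ of $Y_B$, via \Cref{lem:cm_basics}(4)) is correct, and your bookkeeping showing that the final constant does not depend on the radius $N$ of the chunk $B$ is also fine.

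The gap is in the load-bearing step: the ``ball version'' of \Cref{prop:Fuerstenberg}, asserting that a $C''$--quasi-isometric embedding of a Euclidean $N$--ball into $\R^p$ has image $D(C'',p)$--dense in a ball of radius comparable to $N$ about the image of the centre, \emph{uniformly in $N$}. This does not follow from \Cref{prop:Fuerstenberg} as stated, and your first proposed derivation --- extending a quasi-isometric embedding of a ball to a global quasi-isometric self-embedding of $\R^p$ with constants depending only on $C''$ and $p$ --- is not a standard fact and should not be asserted without proof; extension problems for quasi-isometric and bi-Lipschitz embeddings are delicate, and nothing of this kind is provided by \cite{KapovichLeeb97}. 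The local statement is nonetheless true (it is a quantitative coarse invariance of domain), and the cleanest way to get it with uniform constants is, ironically, the very compactness trick you are trying to avoid: given a sequence of counterexamples $f_n\colon B(0,N_n)\ra\R^p$ with missed points $y_n$ at distance $r_n\ra\infty$ from the images, rebase at a closest-approach point $z_n$ (which, for $L>1$, is at distance $\geq cN_n$ from the boundary sphere), pass to the unrescaled ultralimit to obtain a global quasi-isometric embedding $f_{\om}\colon\R^p\ra\R^p$, and observe that a point at bounded distance from $f_{\om}(z_{\om})$ in the direction of $y_n$ survives in the limit while remaining arbitrarily far from the image, contradicting \Cref{prop:Fuerstenberg}. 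So your argument can be completed, but only after either supplying this ultralimit reduction at the level of the model $\R^p$ (at which point the paper's route, which runs the same reduction once at the level of $X$ itself, is no longer longer than yours) or reworking the degree-theoretic proof of \Cref{prop:Fuerstenberg} in its local form.
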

\begin{proof}
To begin with, suppose that we actually have a single subset $Y\sq X$ such that $E\sq\mc{N}_C(Y)$ and such that $Y$ is $C$--quasiconvex and admits a $C$--quasi-isometric embedding $f\colon Y\ra\R^p$. Mapping each point of $E$ to a $C$--close point of $Y$ and then composing with $f$, we obtain a quasi-isometric embedding $E\ra\R^p$. Since $E$ is itself $C$--quasi-isometric to $\R^p$, \Cref{prop:Fuerstenberg} implies that $E$ and $Y$ are at Hausdorff distance at most $C'$, for a constant $C'$ depending only on $C$ and $p$. Finally, the fact that $Y$ is $C$--quasiconvex implies that $E$ is $C''$--quasiconvex for some constant $C''=C''(C,C')$, using \Cref{lem:cm_basics}(6). 

Now, we reduce the general case to the previous one by taking an ultralimit (without rescaling!). For this, let $C''$ be the constant obtained in the previous paragraph, and suppose for the sake of contradiction that that there exist $X,\mu,E$, as in the statement of the corollary, for which $E$ is not $(C''+1)$--quasiconvex. Thus, there are points $x,y\in E$ and $z\in [x,y]$ such that $d(z,E)\geq C''+1$. For each $n\in\N$, there exists a $C$--quasiconvex subset $Y_n\sq X$ such that $Y_n$ admits a $C$--quasi-isometric embedding in $\R^p$ and such that the neighbourhood $\mc{N}_C(Y_n)$ contains the intersection $E\cap\mc{N}_n(x)$. Now, choose a non-principal ultrafilter $\om$ and define $X_{\om}$ as the $\om$--limit of countably many copies of $X$, each based at the point $x$. Let $E_{\om}\sq X_{\om}$ and $Y_{\om}\sq X_{\om}$ be the subspaces obtained as the strong limits, respectively, of the constant sequence $E$ and of the sequence $Y_n$. Also let $\mu_{\om}$ be the $\om$--limit of the coarse medians $\mu_n$, as defined above. We have that $\mu_{\om}$ is again a $C$--coarse median, that $E_{\om}$ is again $C$--quasi-isometric to $\R^p$, and that $Y_{\om}$ is again $C$--quasiconvex and $C$--quasi-isometrically embeddable in $\R^p$. In addition, since the neighbourhoods $\mc{N}_C(Y_n)$ contain larger and larger balls in the set $E$, we have gained in the ultralimit that $E_{\om}\sq\mc{N}_C(Y_{\om})$. We are thus in the situation discussed at the beginning of the proof, so $E_{\om}$ must be $C''$--quasiconvex. At the same time, the constant sequences $x_n=x,y_n=y,z_n=z$ converge to points $x_{\om},y_{\om}\in E_{\om}$ and $z_{\om}\in[x_{\om},y_{\om}]$ with $d(z_{\om},E_{\om})\geq C''+1$, a contradiction.
\end{proof}

\section{Coarse medians and (relative) hyperbolicity}\label{sec:rel_hyp}

\subsection{Hyperbolic spaces and the Morse property}

In this subsection, we give a short proof of the fact that Gromov-hyperbolic metric spaces admit a unique coarse median operator, a fact originally shown in \cite[Theorem~4.2]{NWZ1}. We then go on to deduce that Morse subsets of metric spaces are quasiconvex with respect to all coarse median operators. Recall that a subset of a metric space $A\sq X$ is said to be \emph{$N$--Morse}, for a function $N\colon\R_{\geq 1}\ra\R_{\geq 0}$, if every $(L,L)$--quasigeodesic in $X$ with endpoints on $A$ is contained in the $N(L)$--neighbourhood of $A$.

For both results, the key observation is the following.

\begin{lem}\label{lem:quasi-geodesics_in_intervals}
Let $(X,\mu)$ be a $C$--coarse median space, with $X$ a geodesic metric space. For any $x,y\in X$, the interval $[x,y]$ contains a $(2C,4C)$--quasigeodesic from $x$ to $y$.
\end{lem}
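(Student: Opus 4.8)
The plan is to construct the quasigeodesic explicitly by repeated coarse bisection of the interval $[x,y]$, using the projection-to-interval maps together with the basic identities of \Cref{lem:cm_basics}. The idea is that the midpoint $\mu(x,y,\cdot)$-type construction should let me build a dyadic family of points $p_{k/2^j}\in[x,y]$ behaving like an approximately isometric parametrisation of a geodesic, so that the resulting path is both coarsely embedded (the quasigeodesic lower bound) and coarsely Lipschitz (the upper bound).

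\medskip

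First I would fix $x,y\in X$ and, using geodesicity of $X$, choose an honest geodesic $\gamma\colon[0,\ell]\ra X$ from $x$ to $y$ with $\ell=d(x,y)$. The natural candidate for the quasigeodesic in $[x,y]$ is the composition $\sigma:=\pi_{x,y}\o\gamma$, where $\pi_{x,y}(z)=\mu(x,y,z)$ is the projection to the interval; by construction $\sigma$ takes values in $[x,y]$, with $\sigma(0)\approx_C x$ and $\sigma(\ell)\approx_C y$ since $\mu(x,y,x)=x$ and $\mu(x,y,y)=y$ by (M0). The upper (coarsely Lipschitz) bound is the easy half: for $s\leq t$ we have $d(\sigma(s),\sigma(t))=d(\mu(x,y,\gamma(s)),\mu(x,y,\gamma(t)))\leq C\,d(\gamma(s),\gamma(t))+C=C|t-s|+C$, directly from identity~(CM2) and the fact that $\gamma$ is a unit-speed geodesic. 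This already gives $d(\sigma(s),\sigma(t))\leq 2C|s-t|+2C$ comfortably.

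\medskip

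The main obstacle is the lower bound, i.e.\ showing $d(\sigma(s),\sigma(t))\geq \tfrac{1}{2C}|s-t|-\text{const}$, because coarse medians can \emph{contract} distances, so a priori the projections of two far-apart points on $\gamma$ could collapse. Here I would exploit that all three of $\sigma(s),\sigma(t)$ lie on the interval $[x,y]$, so I can run a coarse-betweenness argument. Concretely, I expect that for $r\leq s\leq t$ the point $\sigma(s)$ lies coarsely in the interval $[\sigma(r),\sigma(t)]$ (this should follow from \Cref{lem:cm_basics}(1)--(2) applied to $x,y$ and the projections, since $\mu(\sigma(r),\sigma(t),\sigma(s))\approx_K\sigma(s)$ after using that each $\sigma(\cdot)\in[x,y]$). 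Granting the coarse-betweenness, the three projected points satisfy an approximate additivity $d(\sigma(r),\sigma(t))\gtrsim d(\sigma(r),\sigma(s))+d(\sigma(s),\sigma(t))-\text{const}$, and I can then sum the coarse triangle estimates along a chain of sample points $s_i=i$ on $[0,\ell]$ to recover a genuine linear lower bound with multiplicative constant controlled by $C$. An alternative route to the same lower bound, which I would keep in reserve if the betweenness bookkeeping gets delicate, is to invoke Zeidler's \Cref{prop:Zeidler}: after passing to a bi-Lipschitz metric $\rho$ making $(X,\rho)$ a genuine median space, the projection $\pi_{x,y}$ becomes $1$-Lipschitz and nonexpanding on the interval in the right sense, and honest median-space interval geometry (monotonicity of the single real parameter measuring position along $[x,y]$) gives the lower bound cleanly, which then transfers back to $d$ up to the bi-Lipschitz constant. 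Either way, combining the upper bound from (CM2) with this lower bound and adjusting the endpoints by the $C$ from the (M0)-localisation yields a $(2C,4C)$--quasigeodesic contained in $[x,y]$, as claimed.
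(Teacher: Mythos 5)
The key step of your argument --- the lower quasigeodesic bound for $\sigma:=\pi_{x,y}\o\gamma$ --- has a genuine gap, and in fact the path $\sigma$ need not be a quasigeodesic at all. As the paper's preliminaries note (citing \cite[Theorem~5.1]{NWZ1}), a geodesic $\gamma$ from $x$ to $y$ can stray arbitrarily far from $[x,y]_{\mu}$; during such an excursion the coarsely Lipschitz projection $\pi_{x,y}$ can collapse a long sub-arc of $\gamma$ onto a bounded subset of $[x,y]$, so that $|s-t|$ is large while $d(\sigma(s),\sigma(t))$ stays bounded. Your proposed justification via ``coarse betweenness'' is circular: \Cref{lem:cm_basics}(2) requires knowing (up to bounded error) that $\sigma(r)\in[x,\sigma(s)]$ and $\sigma(t)\in[\sigma(s),y]$, which is precisely the ordering statement you are trying to establish; merely knowing that the three points lie in $[x,y]$ gives nothing, since in rank $\geq 2$ the interval is a ``fat'' set and three of its points need not be medially collinear, and metric betweenness of $\gamma(r),\gamma(s),\gamma(t)$ does not transfer through $\mu(x,y,\cdot)$. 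The fallback via \Cref{prop:Zeidler} also does not apply: that proposition concerns genuine Lipschitz median operators of finite rank, whereas $\mu$ is only a coarse median (with additive errors and no rank hypothesis), and a coarse median space need not be bi-Lipschitz to a median space; moreover, even in an honest median space, $[x,y]$ is not parametrised by ``a single real parameter'' once the rank is at least $2$, so a Lipschitz path from $x$ to $y$ inside $[x,y]$ can still fail to be a quasigeodesic.

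The paper sidesteps all of this with an extremal argument that your proposal is missing. It takes a chain $u_0=x,u_1,\dots,u_m=y$ in $[x,y]$ with steps $d(u_i,u_{i+1})\leq 2C$ and with $m$ minimal. For any $i<j$ it projects a geodesic from $u_i$ to $u_j$ into $[x,y]$ to produce a competitor chain of at most $d(u_i,u_j)+3$ steps of size $\leq 2C$ joining $u_i$ to $u_j$ inside the interval; minimality of $m$ then forces $|i-j|\leq d(u_i,u_j)+3$, which is exactly the lower bound. Note that the projected geodesic is used only in the easy direction (an upper bound on the number of steps needed between two chain points), so no monotonicity or non-collapsing of the projection is ever required. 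If you want to salvage your write-up, replace the direct analysis of $\sigma$ by this minimality trick; your computation of the upper (coarse Lipschitz) bound via (CM2) is fine and is the part of the projection argument that survives.
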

\begin{proof}
We will construct a sequence of points $u_0,\dots,u_m\in [x,y]$ with $u_0=x$, $u_m=y$, $d(u_i,u_{i+1})\leq 2C$ and $d(u_i,u_j)\geq |i-j|-3$ for all $0\leq i<j\leq m$. In order to get the required quasigeodesic $\gamma\colon [0,m]\ra[x,y]$ from $x$ to $y$, one can then simply define $\gamma(t):=u_{\lfloor t \rfloor}$.

Thus, let $u_0:=x,u_1,\dots,u_m:=y$ be a sequence of points in $[x,y]$ such that $d(u_i,u_{i+1})\leq 2C$ for all $i$, and such that $m$ is the smallest possible. (The coming argument also shows that such a sequence exists.) For some indices $i,j$, let $\alpha\sq X$ be a geodesic from $u_i$ to $u_j$ and consider, for each integer $0\leq k\leq d(u_i,u_j)$, the point $y_k\in\alpha$ with $d(u_i,y_k)=k$. Let $y_k':=\mu(x,y,y_k)$ be the projection of $y_k$ to the interval $[x,y]$. By identity (CM2), we have $d(y_k',y_{k+1}')\leq 2C$ for all $k$. Moreover, by identity (CM1), the point $y_0'=\mu(x,y,u_i)$ is at distance at most $C$ from $u_i$, as we have $u_i\in[x,y]$. Similarly, the point $\mu(x,y,u_j)$ is within distance $C$ of $u_j$, and within distance $2C$ of the last of the $y_k'$. 

In conclusion, the points $y_k'$ (together with the point $\mu(x,y,u_j)$) give a sequence of at most $d(u_i,u_j)+3$ steps of length at most $2C$ leading from $u_i$ and $u_j$ within the interval $[x,y]$. By minimality of the integer $m$ in our choice of the points $u_i$, it follows that $|i-j|\leq d(u_i,u_j)+3$, as desired.
\end{proof}

\begin{cor}
\label{cor:unique_hyp}
    If $X$ is a geodesic hyperbolic space, then $X$ has a unique coarse median structure. More precisely, if $X$ is $\delta$--hyperbolic, there is a constant $D_1=D_1(\delta,C)$ such that any two $C$--coarse medians on $X$ are $D$--close to each other.
\end{cor}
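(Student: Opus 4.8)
The plan is to pin down the location of $\mu(x,y,z)$ purely in terms of the metric of $X$, independently of the coarse median $\mu$: I would show that for any $C$--coarse median $\mu$ and any $x,y,z\in X$, the point $\mu(x,y,z)$ lies within some distance $D_0=D_0(\delta,C)$ of an incenter $c$ of the geodesic triangle on $x,y,z$. Since such a $c$ depends only on the metric and not on $\mu$, applying this both to $\mu$ and to a second coarse median $\nu$ and comparing through $c$ yields $d(\mu(x,y,z),\nu(x,y,z))\leq 2D_0$, which is the desired closeness (to handle medians with different constants, one first replaces the two constants by their maximum). This gives $D_1=2D_0$.

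To produce the one ingredient relating the abstract median to the metric, I would first invoke \Cref{lem:quasi-geodesics_in_intervals} to find, inside each of the three intervals $[x,y]_\mu$, $[y,z]_\mu$, $[z,x]_\mu$, a $(2C,4C)$--quasigeodesic joining the corresponding pair of vertices. By stability of quasigeodesics in a $\delta$--hyperbolic space (the Morse lemma), each such quasigeodesic is within Hausdorff distance $M=M(\delta,C)$ of a geodesic $\sigma_{xy},\sigma_{yz},\sigma_{zx}$ realising the respective side. In particular, each geodesic side lies in the $M$--neighbourhood of the corresponding $\mu$--interval, e.g.\ $\sigma_{xy}\sq\mc{N}_M([x,y]_\mu)$. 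Now, since the triangle on $x,y,z$ is $\delta$--thin, it admits an incenter $c$ lying within $4\delta$ of all three sides (a standard fact; the exact constant depends on the convention for hyperbolicity and is immaterial). Combining the two observations, $c$ lies in $\mc{N}_{M+4\delta}([x,y]_\mu)\cap\mc{N}_{M+4\delta}([y,z]_\mu)\cap\mc{N}_{M+4\delta}([z,x]_\mu)$. By \Cref{lem:cm_basics}(3) this triple intersection has diameter at most $K(M+4\delta)+K$ and also contains $\mu(x,y,z)$; hence $d(\mu(x,y,z),c)\leq D_0:=K(M+4\delta)+K$, as claimed.

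The step I expect to be the crux is the realisation that one should \emph{not} try to prove that the whole interval $[x,y]_\mu$ is Hausdorff-close to $\sigma_{xy}$. The quasigeodesic lemma only yields the one-sided containment $\sigma_{xy}\sq\mc{N}_M([x,y]_\mu)$ for free; the reverse containment, that every point of $[x,y]_\mu$ stays near $\sigma_{xy}$, amounts to a coarse betweenness statement that can genuinely fail in non-hyperbolic coarse median spaces (for instance for the $\ell^1$--median on the Euclidean plane, whose intervals are fat squares) and does not follow from the interval axioms alone. The key to the argument is that the one-sided containment already suffices: the incenter is a $\mu$--independent point that simultaneously lands in the $M$--neighbourhoods of all three intervals, and the diameter estimate of \Cref{lem:cm_basics}(3) then forces $\mu(x,y,z)$ to sit near it, \emph{without} any control on how fat the intervals might be. This is precisely what makes the location of $\mu(x,y,z)$ canonical and lets the comparison of two coarse medians go through.
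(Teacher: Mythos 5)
Your proposal is correct and follows essentially the same route as the paper: both arguments produce a metrically defined centre of the triangle (you use the incenter of the geodesic triangle plus the Morse lemma, the paper uses a quasi-centre of $(2C,4C)$--quasigeodesic triangles directly), place it in a uniform neighbourhood of all three $\mu$--intervals via \Cref{lem:quasi-geodesics_in_intervals}, and then invoke the diameter bound of \Cref{lem:cm_basics}(3) to force $\mu(x,y,z)$ to lie near that centre. The only difference is the cosmetic detour through geodesic representatives of the sides, which changes nothing of substance.
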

\begin{proof}
By hyperbolicity, there exist a constant $D'=D'(\delta,C)$ and a map $m\colon X^3\ra X$ such that, for any three points $x,y,z\in X$ and any $(2C,4C)$--quasigeodesic triangle $T$ with these vertices, the point $m(x,y,z)$ is at distance at most $D'$ from each of the three sides of $T$. If $\mu$ is any $C$--coarse median on $X$, then each of the three intervals $[x,y]_{\mu},[y,z]_{\mu},[z,x]_{\mu}$ contains a $(2C,4C)$--quasigeodesic joining its endpoints, by \Cref{lem:quasi-geodesics_in_intervals}. It follows that the point $m(x,y,z)$ is at distance at most $D'$ from each of the intervals $[x,y]_{\mu},[y,z]_{\mu},[z,x]_{\mu}$. Now, \Cref{lem:cm_basics}(3) implies that the points $m(x,y,z)$ and $\mu(x,y,z)$ are at distance at most $\tfrac{1}{2}D:=KD'+K$, for a constant $K$ depending only on $C$. Thus, any two $C$--coarse medians on $X$ are $D$--close.
\end{proof}

In order to handle Morse subsets, we will require the following strengthening of \Cref{lem:quasi-geodesics_in_intervals}.

\begin{lem}\label{lem:quasi-geodesics_through_point}
Let $(X,\mu)$ be a $C$--coarse median space, with $X$ a geodesic metric space.
\begin{enumerate}
\item Consider points $x,y\in X$ and $p\in [x,y]$. For some $L\geq 1$, let $\alpha\sq [x,p]$ and $\beta\sq[p,y]$ be $(L,L)$--quasigeodesics, respectively, from $x$ to $p$ and from $p$ to $y$. Then, the union $\alpha\cup\beta$ is a $(L',L')$--quasigeodesic from $x$ to $y$, for some $L'=L'(C,L)$.
\item For all $x,y\in X$, each point $p\in[x,y]$ lies on a $(C_2,C_2)$--quasigeodesic from $x$ to $y$ contained in $[x,y]$, for some constant $C_2=C_2(C)$.
\end{enumerate}
\end{lem}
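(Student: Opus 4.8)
The plan is to prove the two parts separately, with part~(1) being the combinatorial core and part~(2) an easy consequence obtained by combining part~(1) with \Cref{lem:quasi-geodesics_in_intervals}. For part~(1), the issue is that concatenating two quasigeodesics is not automatically a quasigeodesic: the upper bound on length is immediate (lengths add, so the combined path has controlled speed), but the lower bound on distance -- showing that points far apart \emph{in the parameter} are far apart \emph{in the metric} -- can fail when $\alpha$ and $\beta$ double back toward each other near the concatenation point $p$. So the heart of the matter is to rule out such backtracking, and the tool for this is the coarse-median structure together with \Cref{lem:cm_basics}(2), which says that $\mu(x',y',p)\approx_K p$ whenever $x'\in[x,p]$ and $y'\in[p,y]$.

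The key step is therefore the following distance estimate: for any $a\in\alpha\sq[x,p]$ and $b\in\beta\sq[p,y]$, I claim $d(a,b)\geq d(a,p)+d(p,b)-(\text{const})$, i.e.\ $p$ lies coarsely between $a$ and $b$. To see this, note $a\in[x,p]$ and $b\in[p,y]$, so \Cref{lem:cm_basics}(2) gives $\mu(a,b,p)\approx_K p$. Since $\mu(a,b,p)\in[a,b]$, the median $\mu(a,b,p)$ lies coarsely on a geodesic from $a$ to $b$, and $\mu(a,b,p)\in[a,p]\cap[p,b]$ up to the coarse-median defining inequalities; more directly, in the median-space approximation provided by \Cref{prop:Zeidler} the point $m(a,b,p)$ satisfies $d(a,b)=d(a,m)+d(m,b)$, and $m\approx_K p$ forces $d(a,b)\geq d(a,p)+d(p,b)-2K'$ for a constant $K'=K'(C)$. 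Alternatively, one argues purely coarsely using the coarse-Lipschitz property~(CM2) applied to $d(a,b)$, $d(a,\mu(a,b,p))$ and $d(\mu(a,b,p),b)$, together with $\mu(a,b,p)\approx_K p$.

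Granting the estimate $d(a,b)\geq d(a,p)+d(p,b)-K'$, the lower bound for $\alpha\cup\beta$ follows by case analysis on the two endpoints of the chosen subsegment. If both lie in $\alpha$ (resp.\ both in $\beta$) the $(L,L)$--quasigeodesic bound for $\alpha$ (resp.\ $\beta$) applies directly. If one endpoint $a$ lies in $\alpha$ and the other $b$ in $\beta$, then the arclength of $\alpha\cup\beta$ between them is (arclength from $a$ to $p$ along $\alpha$) plus (arclength from $p$ to $b$ along $\beta$), each of which is controlled by $L\cdot d(a,p)+L$ and $L\cdot d(p,b)+L$ respectively; combining with the estimate $d(a,b)\geq d(a,p)+d(p,b)-K'$ yields a bound of the form $\text{arclength}\leq L'\,d(a,b)+L'$ with $L'=L'(C,L)$. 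This gives the quasigeodesic lower bound, completing part~(1).

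For part~(2), I would first invoke \Cref{lem:quasi-geodesics_in_intervals} to get a $(2C,4C)$--quasigeodesic inside $[x,p]$ from $x$ to $p$ and another inside $[p,y]$ from $p$ to $y$; this uses that $[x,p]\sq[x,y]$ and $[p,y]\sq[x,y]$ up to uniform error (which follows from \Cref{lem:cm_basics}(4) since $p\in[x,y]$ means $x,p$ and $p,y$ are pairs of points near $[x,y]$). Part~(1) then splices these into a single $(C_2,C_2)$--quasigeodesic from $x$ to $y$ passing through $p$, with $C_2=L'(C,2C)$, and it is contained in $[x,y]$ up to the uniform neighbourhood controlled by \Cref{lem:cm_basics}(4); absorbing that additive error into the quasigeodesic constants yields the statement. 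The main obstacle is genuinely the backtracking control in part~(1), and I expect the cleanest route is the median-space linearisation via \Cref{prop:Zeidler}, which converts the coarse ``$p$ lies between $a$ and $b$'' into an exact metric equality that makes the endpoint case analysis transparent.
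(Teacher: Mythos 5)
Your overall strategy for part~(1) --- using \Cref{lem:cm_basics}(2) to show that $p$ lies coarsely between any $a\in\alpha$ and $b\in\beta$, then running the endpoint case analysis --- is the right one and matches the paper's. However, the key estimate you formulate, $d(a,b)\geq d(a,p)+d(p,b)-K'$, is false in a general coarse median space, and neither of your proposed justifications works. \Cref{prop:Zeidler} applies to a genuine Lipschitz median \emph{algebra} of finite rank, not to a coarse median space: $(X,\mu)$ satisfies (M1) and the Lipschitz condition only up to additive errors, and no finite-rank hypothesis is available here, so there is no median metric bi-Lipschitz to $d$ in which $\mu(a,b,p)$ sits exactly between $a$ and $b$. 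And (CM2) only produces \emph{upper} bounds of the form $d(\mu(u,v,w),\mu(u,v,w'))\leq Cd(w,w')+C$; it cannot yield the lower bound $d(a,b)\geq d(a,m)+d(m,b)-K'$ for $m=\mu(a,b,p)$. Indeed that inequality already fails for $\R^2$ with the Euclidean metric and the coordinatewise median: take $a=(0,0)$, $b=(t,t)$, $p=(t,0)$, so that $\mu(a,b,p)=p$, $d(a,p)+d(p,b)=2t$, but $d(a,b)=\sqrt{2}\,t$. The paper's remark following the definition of quasiconvexity is precisely a warning that coarse median intervals carry no such exact metric betweenness.

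The gap is repairable, and the repair is what the paper does: from $\mu(a,p,b)\approx_K p$ one gets
\[ d(a,p)\approx_K d\bigl(a,\mu(a,p,b)\bigr)=d\bigl(\mu(a,p,a),\mu(a,p,b)\bigr)\leq Cd(a,b)+C, \]
and symmetrically for $d(p,b)$, giving only the \emph{multiplicative} bound $d(a,p)+d(p,b)\leq 2Cd(a,b)+2(C+K)$. This weaker bound still feeds into your case analysis: in the mixed case the arclength is at most $L\bigl(d(a,p)+d(p,b)\bigr)+2L^2\leq 2CL\,d(a,b)+\mathrm{const}$, which is all a quasigeodesic bound requires, so you should restate the key step in this form. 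One further small point in part~(2): after concatenating, the path lies only in $[x,p]\cup[p,y]\sq\mc{N}_K([x,y])$; to obtain a quasigeodesic genuinely \emph{contained} in $[x,y]$, as the statement demands, you must project it to $[x,y]$ via $z\mapsto\mu(x,y,z)$ (which moves points a bounded amount by (CM1) and (CM2)), rather than merely absorbing the error into the constants.
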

\begin{proof}
We begin with the proof of part~(1). Let $K=K(C)$ be the constant provided by \Cref{lem:quasi-geodesics_in_intervals}. For any $x'\in\alpha$ and $y'\in\beta$, we have $\mu(x',p,y')\approx_K p$ by \Cref{lem:quasi-geodesics_in_intervals}(2). It then follows that
\[ d(x',p) \approx_K d\big(x',\mu(x',p,y')\big) = d\big(\mu(x',p,x'),\mu(x',p,y')\big) \leq Cd(x',y')+C, \]
where the last inequality is given by identity (CM2), after using symmetry of $\mu$. As an analogous chain of inequalities applies to the distance $d(p,y')$, we conclude that 
\[ d(x',p)+d(p,y') \leq 2Cd(x',y')+2(C+K) .\]
Now, setting $\eta:=\alpha\cup\beta$ and letting $t,s$ be the times at which $\eta(t)=x'$ and $\eta(s)=y'$, and using the fact that $\alpha$ and $\beta$ are $(L,L)$--quasigeodesics, we obtain
\[ |t-s| \leq Ld(x',p)+Ld(p,y')+2L^2 \leq 2CLd(x',y')+2L(C+K+L) .\]
Since the points $x'\in\alpha$ and $y'\in\beta$ were arbitrary, this shows that $\eta$ is a quasigeodesic with parameters depending only on $L$ and $C$, proving part~(1).

Regarding part~(2), consider $x,y\in X$ and a point $p\in [x,y]$. By \Cref{lem:quasi-geodesics_in_intervals}, there exist $(2C,4C)$--quasigeodesics $\alpha\sq[x,p]$ and $\beta\sq[p,y]$ connecting the endpoints of these two intervals. By part~(1), the union $\eta:=\alpha\cup\beta$ is a quasigeodesic from $x$ to $y$, with quality only depending on $C$. The quasigeodesic $\eta$ is contained in the union $[x,p]\cup[p,y]$, which is contained in the $K$--neighbourhood of the interval $[x,y]$ by \Cref{lem:cm_basics}(4). Projecting $\eta$ onto $[x,y]$ yields the required quasigeodesic from $x$ to $y$.  
\end{proof}

 Thus, part~(2) of \Cref{lem:quasi-geodesics_through_point} immediately implies the following:

\begin{cor}\label{cor:Morse_qc}
Let $(X,\mu)$ be a $C$--coarse median space, with $X$ a geodesic metric space. Let $A\sq X$ be an $N$--Morse subset. Then $A$ is $N(C_2)$--quasiconvex, for the constant $C_2=C_2(C)$ from \Cref{lem:quasi-geodesics_through_point}.
\end{cor}

\subsection{Relatively hyperbolic spaces}

Bowditch showed that the existence of a coarse median structure is invariant under relative hyperbolicity. That is, if a geodesic metric space $X$ is hyperbolic relative to a collection of subspaces $\mc{P}$, and if there exists a constant $C\geq 1$ such that each $P\in\mc{P}$ admits a $C$--coarse median, then $X$ itself admits a coarse median \cite[Theorem~2.1]{Bow-cmrelhyp}.

Here, we address the question of the uniqueness of such coarse medians, extending \Cref{cor:unique_hyp} above.

\begin{thm}
\label{thm:rel_hyp}
Let a geodesic metric space $X$ be hyperbolic relative to a collection of subspaces $\mc{P}$. Suppose that for all constants $C\geq 1$ there exists a constant $D\geq 1$ such that, for all $P\in\mathcal P$, all $C$--coarse medians on $P$ are $D$--close to each other. Then $X$ admits at most one coarse median structure.
\end{thm}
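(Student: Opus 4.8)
The plan is to show that any two $C$-coarse medians $\mu,\nu$ on $X$ are uniformly close, by analyzing them triangle-by-triangle and exploiting the well-understood structure of quasigeodesic triangles in relatively hyperbolic spaces. Fix three points $x,y,z\in X$. Using \Cref{lem:quasi-geodesics_in_intervals}, each interval $[x,y]_\mu$, $[y,z]_\mu$, $[z,x]_\mu$ contains a uniform $(2C,4C)$-quasigeodesic joining its endpoints, and likewise for $\nu$. The strategy is to argue that $\mu(x,y,z)$ and $\nu(x,y,z)$ are both forced to lie uniformly close to a single point determined by the relatively hyperbolic geometry, so they must be close to each other. By \Cref{lem:cm_basics}(3), the median point of a coarse median structure is characterized (up to bounded error) as the unique region lying simultaneously near all three intervals, so it suffices to understand where the three quasigeodesic sides of the triangle mutually come close.

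First I would recall the standard structure theory of quasigeodesic triangles in relatively hyperbolic spaces. There are two regimes. In the \emph{thin} regime, any $(L,L)$-quasigeodesic triangle is $\delta'$-thin for a constant $\delta'=\delta'(L)$ away from the peripheral subspaces $\mc{P}$ — more precisely, the three sides satisfy a Morse-type/thin-triangles property except possibly where they penetrate deeply into a single peripheral set $P\in\mc{P}$. When the three sides do not penetrate deeply into any common peripheral, the triangle behaves hyperbolically, and the argument of \Cref{cor:unique_hyp} applies verbatim: there is a point $m(x,y,z)$ within bounded distance of all three $\mu$-sides and all three $\nu$-sides, forcing $\mu(x,y,z)\approx\nu(x,y,z)$ via \Cref{lem:cm_basics}(3). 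In the second regime, all three sides pass through a large common region of a single peripheral $P\in\mc{P}$; here the median is determined by the geometry inside $P$, and this is where the uniqueness hypothesis on each $P$ enters.

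The key technical point is to relate the coarse median $\mu$ restricted to (or projected onto) a peripheral $P$ with an intrinsic $C'$-coarse median on $P$. Peripheral subspaces are uniformly quasiconvex in the relatively hyperbolic metric, hence Morse, so by \Cref{cor:Morse_qc} each $P$ is uniformly quasiconvex with respect to \emph{any} coarse median on $X$. One then checks that the composition $\mu_P := \pi_P\circ\mu|_{P^3}$ (projecting the $X$-median back into $P$) defines a $C'$-coarse median \emph{on $P$} for a constant $C'$ depending only on $C$ and the relative hyperbolicity constants, using that $\pi_P$ is a coarse Lipschitz retraction onto the quasiconvex set $P$ and the coarse median axioms (CM1)–(CM2) are inherited up to bounded error. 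The same holds for $\nu$, producing $\nu_P$. By the hypothesis of the theorem, $\mu_P$ and $\nu_P$ are $D$-close on $P$. The main obstacle — and the place requiring the most care — is the interface between the two regimes: one must show that when a triangle transitions between behaving hyperbolically and penetrating a peripheral, the location of the median point is governed either by the ambient hyperbolic-like geometry (handled as in \Cref{cor:unique_hyp}) or by the internal geometry of a single $P$ (handled by the uniqueness hypothesis), with no ambiguity in between. This is precisely where ``quasigeodesic triangles in relatively hyperbolic spaces are well-understood'' does the work: the entrance/exit points of each side into the relevant peripheral are coarsely well-defined, so one can reduce the computation of $\mu(x,y,z)$ to finitely many pieces, each either hyperbolic or internal to a peripheral, and combine the close-ness estimates using \Cref{lem:cm_basics}(5) to patch the median across the pieces. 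Assembling these uniform bounds yields a single constant $D_X=D_X(C)$ with $\mu(x,y,z)\approx_{D_X}\nu(x,y,z)$ for all $x,y,z$, establishing that $X$ has at most one coarse median structure.
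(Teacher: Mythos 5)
Your proposal is correct and follows essentially the same route as the paper: Morse-ness of peripherals gives quasiconvexity via \Cref{cor:Morse_qc}, the hypothesis makes the restricted medians close on each $P$, and the triangle-by-triangle dichotomy (hyperbolic-like versus peripheral-dominated) is settled by \Cref{lem:cm_basics}(3) and \Cref{lem:cm_basics}(5) respectively. The ``interface'' issue you flag as the delicate point is handled in the paper by a single citation to Dru\c{t}u--Sapir's barycentre lemma, which gives exactly the clean dichotomy (a point $b$ near all three sides, or three points $a,b,c$ in one peripheral $B$ each near two sides) that your sketch needs, with no further patching required.
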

\begin{proof}
Let us suppose that $\mu$ and $\nu$ are two $C$--coarse medians on $X$, for some $C\geq 1$, and show that $\mu$ and $\nu$ must be close to each other. Each subspace $P\in\mc{P}$ is $N$--Morse in $X$, for a fixed function $N$, by \cite[Lemma~4.15]{DrutuSapir05}. Appealing to \Cref{cor:Morse_qc}, it follows that there exists a constant $D'\geq 0$ such that all $P\in\mc{P}$ are $D'$--quasiconvex with respect to both $\mu$ and $\nu$. In particular, the restrictions of $\mu$ and $\nu$ to each $P\in\mc{P}$ are coarse medians of bounded quality and, by our hypothesis, there exists a constant $D''\geq 0$ such that $\mu(p_1,p_2,p_3)\approx_{D''} \nu(p_1,p_2,p_3)$ for all $P\in\mc{P}$ and all $p_1,p_2,p_3\in P$.

Now, consider three arbitrary points $x,y,z\in X$. By \cite[Lemma~8.19]{DrutuSapir05}, these three points have a ``barycentre'', which is either a point $b\in X$ or a subspace $B\in\mc{P}$. This barycentre is characterised by the fact that there exists a constant $\Delta\geq 0$ with the following property. Whenever $\alpha,\beta,\gamma$ are $(2C,4C)$--geodesics in $X$ forming a triangle with vertices $x,y,z$:
\begin{enumerate}
\item in the former case, $\alpha,\beta,\gamma$ all come within distance $\Delta$ of the point $b$;
\item in the latter, there exist points $a,b,c\in B$ such that the quasigeodesic $\alpha$ comes within distance $\Delta$ of the points $b$ and $c$, the quasigeodesic $\beta$ does of $a$ and $c$, and finally $\gamma$ does of $a$ and $b$.
\end{enumerate}

By \Cref{lem:quasi-geodesics_in_intervals}, we can choose the above quasigeodesics so that $\alpha\sq[y,z]_{\mu}$, $\beta\sq[x,z]_{\mu}$, $\gamma\sq[x,y]_{\mu}$. If the barycentre is a point $b$, this shows that $b$ is $\Delta$--close to all three intervals $[x,y]_{\mu},[y,z]_{\mu},[z,x]_{\mu}$, and hence $b$ is (uniformly) close to $\mu(x,y,z)$, as in the proof of \Cref{cor:unique_hyp}. Since the same argument applies to $\nu$, it follows that the point $\nu(x,y,z)$ is also near $b$, and hence near $\mu(x,y,z)$.

If instead the barycentre is a subspace $B\in\mc{P}$, we have that $\mu(x,y,z)$ is within bounded distance of the point $\mu(a,b,c)$ by \Cref{lem:cm_basics}(5), and similarly $\nu(x,y,z)$ is within bounded distance of $\nu(a,b,c)$. Finally, $\mu(a,b,c)$ and $\nu(a,b,c)$ are uniformly close, because $\mu|_B$ and $\nu|_B$ are close, as observed above.

In conclusion, in all cases, we have $\mu(x,y,z)\approx_D\nu(x,y,z)$ for a constant $D\geq 0$ independent of the points $x,y,z\in X$, proving the theorem. 
\end{proof}

\section{Medians on products}\label{sec:prod_hyp}

Following \cite{Bow-hypproducts}, we say that a geodesic hyperbolic space $Z$ is \emph{bushy} if there exists a constant $\lambda\geq 0$ such that, for any point $z\in Z$, there exist three ideal points $\xi_1,\xi_2,\xi_3\in\partial_{\infty}Z$ with all pairwise Gromov products based at $x$ bounded above by $\lambda$. An $\R$--tree $T$ is \emph{furry} if removing any point of $T$ results in at least $3$ connected components. Note that all asymptotic cones of a bushy hyperbolic space are furry.

The goal of this section is to show that products of furry trees admit a unique structure of (Lipschitz) median algebra, and deduce from this that products of bushy hyperbolic spaces admit a unique coarse median structure.

\subsection{Products of trees}

The following result can be deduced by combining various results of Bowditch.

\begin{prop}\label{prop:unique_product_trees}
Let $P:=\prod_{1\leq i\leq p}\mc{T}_i$ be a finite product of furry $\R$--trees. Then $P$ admits a unique Lipschitz median operator.
\end{prop}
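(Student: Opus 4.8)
The plan is to prove that every Lipschitz median operator $m$ on $P$ coincides with the standard product median $m_0$, the latter computing the usual tree-median in each factor $\mc{T}_i$. Since a median operator is always recovered from its betweenness relation --- one has $m(x,y,z)$ as the unique point of $[x,y]_m\cap[y,z]_m\cap[z,x]_m$ --- it suffices to show that the intervals of $m$ agree with those of $m_0$. The first reduction is to pass from the metric to the topological setting: a Lipschitz median operator on $P$ has finite rank (bounded by the topological dimension $p$ of $P$), so \Cref{prop:Zeidler} furnishes a metric $\rho$ bi-Lipschitz to the product metric $d$ with respect to which $(P,\rho)$ is a median space inducing $m$. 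In particular $m$ is continuous for the standard topology on $P$, exactly as $m_0$ is; so both are continuous median operators on the one topological space $P$, and the problem becomes the purely topological assertion that $P$ carries a unique continuous finite-rank median.

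Second, I would treat a single factor. Let $\mc{T}$ be a furry $\R$-tree and let $m$ be any continuous median operator on it. A continuous median has connected intervals (each $[x,y]_m$ is the image of the connected space $\mc{T}$ under $w\mapsto m(x,y,w)$), and in rank $1$ every interval is a chain, hence a topological arc joining $x$ to $y$. Since an $\R$-tree contains a unique arc between any two points, this forces $[x,y]_m=[x,y]_{m_0}$ for all $x,y$, and therefore $m=m_0$ by the betweenness characterisation above. Here furriness guarantees the rigidity we need --- it is exactly what fails for the line $\R$, and indeed is the mechanism behind the non-uniqueness of median structures on $\R^2$ recorded in \Cref{rmk:R_factor}. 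This factorwise statement, together with the recovery of the median from betweenness, is the elementary end of the argument and is essentially contained in Bowditch's treatment of rank-$1$ median algebras in \cite{Bow-book}.

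The main obstacle is the product step: I must show that any continuous finite-rank median $m$ on $P$ respects the factor decomposition, i.e.\ splits as the coordinatewise median of factor medians, after which the single-factor result finishes the proof. The natural route is through the wall (halfspace) structure of the median space $(P,\rho)$: one shows that the walls of $m$ organise into $p$ parallel families indexed by the factors, so that the convex partition they induce matches the product structure. The difficulty is that \emph{a priori} the wall families of $m$ need not align with the coordinates, and ruling out ``diagonal'' or ``sheared'' walls is precisely where furriness must be used globally rather than factorwise: the dense branch loci of the furry trees are local topological invariants of $P$, preserved because $m$ and $m_0$ live on the same topological space, and these pin the wall directions to the coordinate axes. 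I expect this splitting to follow by assembling Bowditch's results on the structure of finite-rank topological median algebras and their decomposition into irreducible factors \cite{Bow-book,Bow-hypproducts}; verifying that the hypotheses of those results apply to $P$, and that the decomposition is forced to be the given product of trees, is the technical heart of the proof.
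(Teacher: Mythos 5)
Your overall strategy is the same as the paper's: use the covering dimension of $P$ to bound the rank, apply \Cref{prop:Zeidler} to replace the Lipschitz median operator by a genuine median metric bi-Lipschitz to the product metric, and then appeal to rigidity of median structures on products of furry trees. The first two steps are carried out correctly and match the paper's proof essentially verbatim.

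The problem is the third step. The assertion that a continuous finite-rank median on $P$ must split as the coordinatewise product of factor medians is exactly the content of the proposition, and your proposal does not prove it: you correctly flag it as ``the technical heart'' and then only record the expectation that it follows from Bowditch's structure theory of finite-rank topological median algebras. That is a genuine gap, not a routine verification --- the abstract decomposition of a median algebra into irreducible factors is a priori unrelated to the given topological product decomposition of $P$, and ruling out sheared or diagonal wall families is precisely what requires an argument (your own discussion makes this point). The paper closes this gap by citing a specific result, \cite[Proposition~4.8]{Bow-largescale}, which directly yields that the identity map $(P,n)\to(P,\nu)$ is an isomorphism of median algebras once $(P,\nu)$ is realised as a complete median metric space inducing the standard topology; no separate single-factor analysis is needed. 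To complete your proof you would need either to verify the hypotheses of that result, or to carry out the wall-alignment argument you sketch. Two smaller remarks: your single-factor step, while correct, does not actually use furriness (uniqueness of continuous rank-one medians holds for any $\R$-tree, including $\R$ itself --- furriness only becomes essential in the product step, consistent with \Cref{rmk:R_factor}); and the claim that a connected chain is automatically an arc is cleanest if you first pass to the Zeidler metric, under which rank-one intervals are geodesics in an $\R$-tree.
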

\begin{proof}
Let $n\colon P^3\ra P$ be the standard median operator on $P$, obtained by computing medians coordinate-wise. Let $\nu$ be some other Lipschitz median operator on $P$. Let $d$ be the metric on $P$ that is the $\ell^1$ product of the metrics on the factors $\mc{T}_i$.

Since $P$ has covering dimension equal to $p$, the median algebra $(P,\nu)$ has rank equal to $p$: one inequality follows from \cite[Theorem~2.2 and Lemma~7.6]{Bow-coarsemedian} and the other e.g.\ from \cite[Proposition~5.6]{Bow-someproperties}. By \Cref{prop:Zeidler}, it follows that there exists a metric $\rho$ on $P$ such that $(P,\rho)$ is a median space with induced median operator $\nu$, and such that $\rho$ is bi-Lipschitz equivalent to $d$. In particular, $(P,\rho)$ is a complete metric space and it has the same induced topology as $(P,d)$.

Now, \cite[Proposition~4.8]{Bow-largescale} implies that the identity map $(P,n)\ra (P,\nu)$ is an isomorphism of median algebras. In other words, $\nu=n$ as desired.
\end{proof}

\subsection{Products of hyperbolic spaces}

Let $X=\prod_{1\leq i\leq p} X_i$ be a product of bushy geodesic hyperbolic spaces. By \Cref{cor:unique_hyp}, each space $X_i$ admits a unique coarse median structure $[m_i]$. Let $m\colon X^3\ra X$ denote any coarse median obtained as the coordinate-wise product of the $m_i$. We refer to $[m]$ as the \emph{standard} coarse median structure on $X$. Our goal is now to show that this is the only possible coarse median structure on the product $X$.

Thus, consider an arbitrary coarse median $\mu$ on $X$. We wish to analyse the structure of $\mu$ by appealing to \Cref{lem:Bowditch} above. For this, we need suitable families of subsets $\mc{E},\mc{F},\mc{E}^{\infty}(\cdot),\mc{F}^{\infty}(\cdot)$ of $X$ and its asymptotic cones. The families $\mc{E}$, $\mc{E}^{\infty}(\cdot)$ and $\mc{F}^{\infty}(\cdot)$ will be defined purely in terms of the metric on $X$ and its asymptotic cones, whereas the family $\mc{F}$ will only depend on the chosen coarse median $\mu$. 

The family $\mc{E}$ consists of \emph{maximal flats} in $X$, namely products $\prod_{1\leq i\leq p}\gamma_i$ with each $\gamma_i\sq X_i$ a geodesic line. Each asymptotic cone $X^{\infty}$ of $X$ is a product of $\R$--trees $\prod_{1\leq i\leq p}T_i$. We similarly define $\mc{E}^{\infty}(X^{\infty})$ to be the family of maximal flats in $X^{\infty}$, that is, products $\prod_{1\leq i\leq p}\gamma_i$ with each $\gamma_i\sq T_i$ a geodesic line. Finally, $\mc{F}^{\infty}(X^{\infty})$ is the family of \emph{metric panels} in $X^{\infty}$: products $\prod_{1\leq i\leq p}\alpha_i$ of geodesics $\alpha_i\sq T_i$. Each $\alpha_i$ might be a single point, a geodesic segment, a ray, or a line; thus, elements of $\mc{F}^{\infty}(X^{\infty})$ need not be $p$--dimensional and they might be compact. Note that we have $\mc{E}^{\infty}(X^{\infty})\sq\mc{F}^{\infty}(X^{\infty})$. 

Before describing the family $\mc{F}$, we need a definition and a lemma.

\begin{defn}
Let $(Z,\nu)$ be a coarse median space and $h\geq 1$ a constant.
\begin{enumerate}
\item A map $f\colon M\ra Z$, where $(M,m)$ is a median algebra, is an \emph{$h$--quasimorphism} if, for all points $x,y,z\in M$, we have $\nu(f(x),f(y),f(z))\approx_hf(m(x,y,z))$.
\item If $M$ is also equipped with a metric, a map $f\colon M\ra Z$ is an \emph{$h$--strong quasimorphism} if it is both an $h$--quasimorphism and an $h$--quasi-isometric embedding.
\item An \emph{$n$--dimensional $h$--cuboid} in $(Z,\nu)$ is an $h$--quasiconvex subset of $Z$ that is also the image of an $h$--strong quasimorphism $f\colon\mf{c}\ra Z$, where $\mf{c}$ is an $n$--dimensional Euclidean cube.
\end{enumerate}
\end{defn}

Recall that $p$ is the number of factors of the product $X$. Weak and strong limits were defined in \Cref{sub:ultralimits}.

\begin{lem}\label{lem:weak_limit}
There exists a constant $h=h(X,\mu)$ such that the following holds. For every asymptotic cone $X^{\infty}$ of $X$, each element of the family $\mc{F}^{\infty}(X^{\infty})$ is a weak limit of $p$--dimensional $h$--cuboids in $X$.
\end{lem}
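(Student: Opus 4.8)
The plan is to reduce the statement to a single uniform claim about boxes in $X$, and then prove that claim by a rescaling argument in the asymptotic cone, where \Cref{prop:unique_product_trees} pins down the median exactly. First I would reduce to bounded subsets: fix a metric panel $F=\prod_i\alpha_i\in\mc{F}^{\infty}(X^{\infty})$ and a bounded subset $B\sq F$. Since $F$ is a product of geodesics, $B$ is contained in a product $\prod_i\beta_i$ where each $\beta_i\sq\alpha_i$ is a compact geodesic segment (possibly a point). Writing $X^{\infty}=\prod_iT_i$ as the $\om$--limit of rescaled copies of $X$, each endpoint of $\beta_i$ is represented by a sequence in $X_i$; joining these by geodesic segments $\gamma_{i,n}\sq X_i$ gives products $Q_n:=\prod_i\gamma_{i,n}\sq X$, each genuinely $p$--dimensional, whose strong limit is $\prod_i[\,\lim_{\om}\gamma_{i,n}]\supseteq\prod_i\beta_i\supseteq B$. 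Thus it suffices to show that all such products of geodesic segments are $p$--dimensional $h$--cuboids for a uniform $h=h(X,\mu)$: this exhibits $F$ as a weak limit of $p$--dimensional $h$--cuboids.

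So the heart of the argument is the uniform claim: \emph{there is $h=h(X,\mu)$ such that every product $\prod_i\gamma_i$ of geodesic segments $\gamma_i\sq X_i$, with its arc-length parametrisation $f\colon\prod_iI_i\ra X$ from a Euclidean cube, is a $p$--dimensional $h$--cuboid in $(X,\mu)$.} The quasi-isometric-embedding part of the strong quasimorphism requirement is immediate, since $f$ is an isometric embedding for the $\ell^1$ product metric. What remains is to bound, uniformly over all such boxes, both the \emph{quasimorphism defect} of $f$ (the constant in $\mu(f(a),f(b),f(c))\approx_h f(m(a,b,c))$, where $m$ is the cube median) and the \emph{quasiconvexity defect} of the image $\prod_i\gamma_i$ with respect to $\mu$.

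I would prove both bounds simultaneously by contradiction, passing to a \emph{rescaled} asymptotic cone. If no uniform $h$ worked, there would be boxes $\prod_i\gamma_i^{(n)}$ together with triples of points (for the quasimorphism defect) or pairs $a_n,b_n$ in the box and $z_n\in[a_n,b_n]_{\mu}$ (for the quasiconvexity defect) witnessing defect $\geq n$. Rescaling the metric by $\approx 1/n$ and taking an $\om$--limit, the witnessing points stay at bounded rescaled distance, the box strongly converges to a product of geodesics $\prod_i\sigma_i\sq\prod_iT_i$ in the cone, and, crucially, the additive errors in (CM1)--(CM2) disappear so that $\mu_{\om}$ is a Lipschitz median operator on $\prod_iT_i$. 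Since each $T_i$ is furry (all asymptotic cones of bushy hyperbolic spaces are), \Cref{prop:unique_product_trees} forces $\mu_{\om}$ to be the standard coordinate-wise product median. For this median, a product of geodesics $\prod_i\sigma_i$ is a convex median subalgebra on which the restricted median is again the standard one. Hence in the limit the quasimorphism defect vanishes (the cube median and $\mu_{\om}$ agree on $\prod_i\sigma_i$) and the quasiconvexity defect vanishes (convexity gives $[a_{\om},b_{\om}]\sq\prod_i\sigma_i$, so $\lim z_n\in\prod_i\sigma_i$), each contradicting that the rescaled defect is bounded below by a positive constant.

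The main obstacle is precisely this uniform control over \emph{all} boxes and all configurations of points at once; individual boxes are easy, but the constant must not depend on the box, and in particular quasiconvexity cannot be obtained from a Morse property, since maximal flats in products of bushy hyperbolic spaces are generally \emph{not} Morse (detours of depth growing linearly with the distance are allowed). The rescaling argument circumvents this by converting a hypothetical sequence of worsening defects into a genuine defect for $\mu_{\om}$ on a product of geodesics in the cone, which is impossible by uniqueness of the median there. Once the uniform claim is established, the weak-limit conclusion follows from the de-limiting construction of the first paragraph, completing the proof.
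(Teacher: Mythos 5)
Your reduction in the first paragraph (exhaust a panel by products of nondegenerate segments, lift their endpoints and join by geodesics in the $X_i$, so that the resulting boxes $Q_n=\prod_i\gamma_{i,n}$ strongly converge onto the given bounded piece) is fine and matches the spirit of the paper's proof. The problem is the ``uniform claim'' that carries all the weight: that \emph{every} product of geodesic segments in the $X_i$ is a $p$--dimensional $h$--cuboid for a single $h=h(X,\mu)$, proved by rescaling a sequence of worsening defects into an asymptotic cone. This rescaling argument does not close. To keep the witnessing configuration (the triple for the quasimorphism defect, or the pair $a_n,b_n$ for the quasiconvexity defect) at bounded distance in the cone, you must rescale by the diameter of that configuration; but the defect is only known to be at most a constant times that diameter (e.g.\ $d(z_n,a_n)\leq Cd(a_n,b_n)+C$ for $z_n\in[a_n,b_n]_\mu$), so after rescaling the defect may tend to $0$ and you get no contradiction. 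If instead you rescale by the defect itself, the points $a_n,b_n$ (or the triple) may escape to infinity in the cone, and convexity of $Q_\om$ for $\mu_\om$ says nothing about a point $z_\om$ that is not exhibited as lying in an interval between two points \emph{of the cone}. What this type of argument genuinely yields is only sublinear control: $[a,b]_\mu\sq\mc{N}_{o(d(a,b))}(Q)$. Upgrading $o(d)$ to a uniform additive constant is exactly the hard step, and --- as you yourself note --- it cannot come from a Morse property here, since these flats are not Morse.

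This is why the paper does not argue this way. It never proves directly that arbitrary products of segments are uniform cuboids at this stage (that statement is essentially \Cref{cor:straight_cubes}, which is deduced \emph{downstream} of this lemma via \Cref{lem:Bowditch} and \Cref{cor:Fuerstenberg}, so assuming it here would be circular). Instead, the paper only needs cuboids \emph{containing} the limiting panel, and it manufactures them with uniform constants by quoting Bowditch: the vertex set of the panel $F$ is a $0$--quasimorphism image of $\{0,1\}^p$ (using \Cref{prop:unique_product_trees} to know $\mu^\infty$ is standard, as you do), Lemma~11.2 of \cite{Bow-quasiflats} produces uniformly straight $h'$--quasimorphisms $\{0,1\}^p\to X$ converging to it, and Lemmas~9.2--9.3 of \cite{Bow-quasiflats} extend these to $h$--strong quasimorphisms on Euclidean cubes with $h$--quasiconvex images. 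Those lemmas are precisely the uniformity input your rescaling argument is missing. To repair your proof you would either need to import that machinery or supply a genuinely new argument promoting the sublinear estimate to a bounded one.
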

\begin{proof}
Recall that each asymptotic cone $X^{\infty}$ is a product $\prod_{1\leq i\leq p}T_i$ of $\R$--trees. We need to show that, for every $F\in\mc{F}^{\infty}(X^{\infty})$, every bounded subset of $F$ is contained in a strong limit of $p$--dimensional uniform-quality cuboids of $X$. Each element of $\mc{F}^{\infty}(X^{\infty})$ is exhausted by products $\prod_i\alpha_i$ of geodesic segments $\alpha_i\sq T_i$ and, up to enlarging the product, we can assume that none of the $\alpha_i$ is a single point. Thus, it suffices to consider the case when $F=\prod_{1\leq i\leq p}\alpha_i$ for non-trivial segments $\alpha_i\sq T_i$. 

The coarse median $\mu$ induces a Lipschitz median operator $\mu^{\infty}$ on $X^{\infty}$. Since the $X_i$ are bushy by hypothesis, the trees $T_i$ are furry, and thus \Cref{prop:unique_product_trees} implies that $\mu^{\infty}$ is the standard median operator on the product of trees. As a consequence, the set $F\sq X^{\infty}$ is convex for $\mu^{\infty}$, and its vertex set (i.e.\ the finite set of points all of whose coordinates are endpoints of the $\alpha_i$) is a median subalgebra of $(X^{\infty},\mu^{\infty})$ isomorphic to a $p$--dimensional discrete cube.

Let $f\colon\{0,1\}^p\ra X^{\infty}$ be an injective $0$--quasimorphism (i.e.\ a median morphism) with image the vertex set of $F$. By \cite[Lemma~11.2]{Bow-quasiflats}, there exists a constant $h'=h'(X,\mu)$ 
such that there is a sequence of $h'$--quasimorphisms $f_n\colon\{0,1\}^p\ra (X,\mu)$ converging strongly to $f$; moreover, the images of the $f_n$ are uniformly ``straight'' in Bowditch's sense.
The latter implies that the $f_n$ can be extended to $h$--strong quasimorphisms $f_n\colon\mf{c}_n\ra X$ with $h$--quasiconvex image, where the $\mf{c}_n$ are $p$--dimensional Euclidean cubes and $h=h(X,\mu)$; this follows from Lemmas~9.2 and~9.3 in \cite{Bow-quasiflats}, as well as the discussion between them.

In conclusion, the sets $f_n(\mf{c}_n)$ are $h$--cuboids in $(X,\mu)$ and they converge strongly to a subset $C^{\infty}\sq X^{\infty}$ containing the vertices of the set $F$. In the limit, the set $C^{\infty}$ is $0$--quasiconvex, that is, convex. Since $F$ is the convex hull if its vertex set (again because $\mu^{\infty}$ is standard), this shows that $F\sq C^{\infty}$. In conclusion, $F$ is a weak limit of $h$--cuboids, as we wanted.
\end{proof}

We thus define $\mc{F}$ as the collection of all $p$--dimensional $h$--cuboids in $X$, for the constant $h=h(X,\mu)$ provided by \Cref{lem:weak_limit}. We can now check that our families satisfy Bowditch's conditions.

\begin{lem}\label{lem:conditions_satisfied}
The families $\mc{E},\mc{F},\mc{E}^{\infty}(\cdot),\mc{F}^{\infty}(\cdot)$ defined above satisfy conditions (E1)--(E3) of \Cref{sub:ultralimits}.
\end{lem}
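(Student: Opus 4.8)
The plan is to verify conditions (E1)--(E3) one at a time, checking each against the definitions of the four families. Throughout I fix an arbitrary asymptotic cone $X^{\infty}=\prod_{1\leq i\leq p}T_i$ and recall the key structural input: since each $X_i$ is bushy, each $T_i$ is furry, and hence by \Cref{prop:unique_product_trees} the induced Lipschitz median $\mu^{\infty}$ on $X^{\infty}$ is the \emph{standard} product median. This fact is what makes the geometry of $\mc{E}^{\infty}(X^{\infty})$ and $\mc{F}^{\infty}(X^{\infty})$ (maximal flats and metric panels, i.e.\ products of geodesics) agree with the \emph{median-algebraic} structure, and it will be used in all three verifications.

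For (E1), I would split into three assertions. First, every strong limit of $\mc{E}$ lies in $\mc{E}^{\infty}(X^{\infty})$: a strong limit of maximal flats $\prod_i\gamma_i^{(n)}$ is the product of the $\om$--limits of the geodesic lines $\gamma_i^{(n)}$ in each rescaled factor, and an ultralimit of geodesic lines in a hyperbolic space is a geodesic line in the limiting $\R$--tree $T_i$, so the limit is again a maximal flat. Second, every strong limit of $\mc{F}$ lies in $\mc{F}^{\infty}(X^{\infty})$: here $\mc{F}$ consists of $p$--dimensional $h$--cuboids, and I must argue that an $\om$--limit of such cuboids is a metric panel. The cuboids are images of $h$--strong quasimorphisms from Euclidean cubes with $h$--quasiconvex image; in the asymptotic cone the additive errors vanish, so the limit is the image of a bi-Lipschitz median morphism from a (possibly degenerate or unbounded) Euclidean cube into $(X^{\infty},\mu^{\infty})$, with convex image. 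Since $\mu^{\infty}$ is standard, a convex subset arising this way is exactly a product of geodesics in the factors, i.e.\ an element of $\mc{F}^{\infty}(X^{\infty})$. Third, every element of $\mc{F}^{\infty}(X^{\infty})$ is a weak limit of $\mc{F}$: this is precisely the content of \Cref{lem:weak_limit}, so it is immediate.

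Conditions (E2) and (E3) should be purely structural statements about the model spaces. For (E2), every element of $\mc{E}^{\infty}(X^{\infty})$ is a maximal flat $\prod_i\gamma_i$ with each $\gamma_i$ a line, and a line is in particular a geodesic, so the same product is a metric panel; hence each element of $\mc{E}^{\infty}(X^{\infty})$ is itself an element of $\mc{F}^{\infty}(X^{\infty})$ (indeed $\mc{E}^{\infty}(X^{\infty})\sq\mc{F}^{\infty}(X^{\infty})$ was already observed), so in particular it is a subset of one. For (E3), suppose a maximal flat $E=\prod_i\gamma_i$ is contained in a bounded neighbourhood of a metric panel $F=\prod_i\alpha_i$. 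The product metric being the $\ell^1$ (or equivalently, up to bi-Lipschitz, $\ell^{\infty}$) combination of the factor metrics, the projection of $E$ to the $i$-th factor $T_i$ must lie in a bounded neighbourhood of $\alpha_i$; but $\gamma_i$ is a full geodesic line in an $\R$--tree, and a line can only stay within bounded distance of a geodesic $\alpha_i$ if $\alpha_i$ is itself that entire line, i.e.\ $\gamma_i\sq\alpha_i$ and in fact $\gamma_i=\alpha_i$. Running this coordinatewise gives $E\sq F$, as required.

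The main obstacle I expect is the second part of (E1), namely showing that strong limits of $h$--cuboids are genuinely metric panels rather than merely convex subsets that could a priori be ``bent''. The crux is to combine two facts: that the quasi-isometric-embedding condition guarantees the limit is bi-Lipschitz to a Euclidean cube (so it is $p$--dimensional and non-degenerate in the expected directions), and that the quasimorphism condition forces the limit to be median-convex for $\mu^{\infty}$. Once both hold, the rigidity of the standard median on a product of $\R$--trees (\Cref{prop:unique_product_trees} again) pins down convex bi-Lipschitz-flat subsets to be exactly products of geodesics, ruling out any diagonal or bent configuration. I would be careful to track the uniformity of the constant $h$ through the ultralimit so that the limiting object really has vanishing error and lands on the nose in $\mc{F}^{\infty}(X^{\infty})$.
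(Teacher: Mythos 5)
Your proposal is correct and follows essentially the same route as the paper's (much terser) proof: the same case split into the three parts of (E1) plus (E2)--(E3), the same appeal to \Cref{prop:unique_product_trees} to see that strong limits of $h$--cuboids are convex for the standard median and hence metric panels, and the same citation of \Cref{lem:weak_limit} for the weak-limit clause. The extra detail you supply (convex subsets of a product median algebra are products of their projections, and the bi-Lipschitz-cube structure rules out branching in each factor; the coordinatewise argument for (E3)) is accurate and simply fills in steps the paper declares immediate.
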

\begin{proof}
Conditions~(E2) and~(E3) are immediate from the definitions. Regarding Condition~(E1), it is also clear that all strong limits of the family $\mc{E}$ lie in $\mc{E}^{\infty}(X^{\infty})$. Strong limits of the family $\mc{F}$ lie in $\mc{F}^{\infty}(X^{\infty})$ because the median operator $\mu^{\infty}$ on $X^{\infty}$ is standard by \Cref{prop:unique_product_trees}. Finally, every set in $\mc{F}^{\infty}(X^{\infty})$ is a weak limit of sets in $\mc{F}$ by \Cref{lem:weak_limit}.
\end{proof}

We can finally deduce that maximal flats in $X$ are uniformly quasiconvex:

\begin{prop}\label{prop:quasiconvex_flats}
There exists a constant $k=k(X,\mu)$ such that all elements of $\mc{E}$ are $k$--quasiconvex.
\end{prop}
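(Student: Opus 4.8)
The plan is to combine the structural lemmas established just above with Bowditch's comparison result, \Cref{lem:Bowditch}, and then finish with the quasiflat quasiconvexity criterion, \Cref{cor:Fuerstenberg}. The whole setup has been arranged precisely so that these three inputs fit together: \Cref{lem:conditions_satisfied} verifies conditions (E1)--(E3) for the families $\mc{E}$ (maximal flats), $\mc{F}$ ($p$--dimensional $h$--cuboids), $\mc{E}^{\infty}(\cdot)$ and $\mc{F}^{\infty}(\cdot)$, so that \Cref{lem:Bowditch} becomes applicable. I would begin by invoking it directly.

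\emph{First step.} Applying \Cref{lem:Bowditch} to these families yields a constant $R=R(C)$ such that each bounded subset of each maximal flat $E\in\mc{E}$ is contained in the $R$--neighbourhood of some $p$--dimensional $h$--cuboid in $X$ (an element of $\mc{F}$). This is the content of the lemma's conclusion: bounded pieces of elements of $\mc{E}$ are approximated by elements of $\mc{F}$.

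\emph{Second step.} Now I would feed this into \Cref{cor:Fuerstenberg}, whose hypotheses are tailored for exactly this situation. A maximal flat $E=\prod_i\gamma_i$ with each $\gamma_i$ a geodesic line is quasi-isometric to $\R^p$ (with quality controlled by the hyperbolicity and product constants of $X$, hence by $X$ alone). For each bounded $B\sq E$, the cuboid $Y_B\in\mc{F}$ produced in the first step is $h$--quasiconvex by definition of $\mc{F}$, and it is the image of an $h$--strong quasimorphism from a $p$--dimensional Euclidean cube, hence $h$--quasi-isometrically embeddable in $\R^p$. Thus $B\sq\mc{N}_R(Y_B)$ with $Y_B$ a quasiconvex set quasi-isometric to $\R^p$, which is precisely the hypothesis of \Cref{cor:Fuerstenberg}. (The constant appearing there should be taken as the maximum of $R$, $h$, and the quasi-isometry constant of maximal flats, to match the single-constant formulation.) The corollary then outputs a constant $C_1$, depending only on this combined constant and on $p$, such that $E$ is $C_1$--quasiconvex. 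Setting $k:=C_1$ gives the claim, with $k$ depending only on $X$ and $\mu$ through the constants $C$, $h$, and the geometry of $X$.

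\emph{Main obstacle.} The genuinely nontrivial work has already been discharged in the preceding lemmas --- in particular \Cref{lem:weak_limit}, which relies on the uniqueness of the median operator on products of furry trees (\Cref{prop:unique_product_trees}) to identify the limiting median operator $\mu^{\infty}$ as the standard one, and on Bowditch's straightness machinery to produce the cuboids. So the remaining step I expect to require the most care is purely bookkeeping: confirming that the quasi-isometry constant for maximal flats $E$ is bounded in terms of $X$ (not of the individual flat), and verifying that the constant $C_1$ output by \Cref{cor:Fuerstenberg} indeed depends only on $X$ and $\mu$ and not on the particular flat chosen. Both follow because all flats in a fixed product of fixed hyperbolic spaces are uniformly quasi-isometric to $\R^p$, and because every constant in sight ($R$, $h$, $C$) is uniform over $\mc{E}$. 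There is no analytic difficulty, only the need to check that no constant secretly depends on the individual element of $\mc{E}$.
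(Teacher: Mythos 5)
Your proposal follows exactly the paper's proof: invoke \Cref{lem:Bowditch} (justified by \Cref{lem:conditions_satisfied}) to place bounded subsets of maximal flats in uniform neighbourhoods of $h$--cuboids, then apply \Cref{cor:Fuerstenberg} using that flats are uniformly quasi-isometric to $\R^p$ and cuboids are uniformly quasiconvex and quasi-isometrically embeddable in $\R^p$. The argument and the bookkeeping of constants are correct and match the paper.
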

\begin{proof}
By \Cref{lem:conditions_satisfied}, we can apply \Cref{lem:Bowditch} to the families $\mc{E},\mc{F},\mc{E}^{\infty}(\cdot),\mc{F}^{\infty}(\cdot)$. As a result, there exists a constant $R=R(X,\mu)$ such that every bounded subset of an element of $\mc{E}$ is contained in the $R$--neighbourhood of an element of $\mc{F}$. 

By construction, all elements of $\mc{E}$ are bilipschitz equivalent to $\R^p$ and all elements of $\mc{F}$ embed in $\R^p$ uniformly quasi-isometrically. Moreover, the elements of $\mc{F}$ are uniformly quasiconvex. Thus, \Cref{cor:Fuerstenberg} implies that the elements of $\mc{E}$ are uniformly quasiconvex, as claimed.
\end{proof}

\begin{cor}
\label{cor:straight_cubes}
    There exists a constant $k'=k'(X,\mu)$ such that all products of $p$ geodesic segments in the $X_i$ are $k'$-quasiconvex.
\end{cor}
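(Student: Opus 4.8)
The plan is to deduce the statement from the quasiconvexity of maximal flats (\Cref{prop:quasiconvex_flats}), combined with the fact that in every asymptotic cone the induced median $\mu^{\infty}$ is the \emph{standard} median on a product of trees (\Cref{prop:unique_product_trees}), for which products of geodesics are convex. First I would reduce to the case of a sub-box of a genuine maximal flat. Since the $X_i$ are bushy, every geodesic segment $\sigma_i\sq X_i$ can be extended, towards suitable ideal points, to a bi-infinite uniform quasigeodesic, and hence lies in a uniformly bounded neighbourhood of a bi-infinite geodesic $\gamma_i\sq X_i$. Thus any product of segments $B=\prod_i\sigma_i$ is at bounded Hausdorff distance from a sub-box $B'=\prod_i\sigma_i'$ of the maximal flat $F=\prod_i\gamma_i\in\mc{E}$, with $\sigma_i'\sq\gamma_i$. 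By \Cref{lem:cm_basics}(6) it therefore suffices to prove uniform quasiconvexity for such sub-boxes $B'\sq F$, where $F$ is $k$-quasiconvex by \Cref{prop:quasiconvex_flats}; from now on I write $B$ for such a sub-box.

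To prove that sub-boxes are uniformly quasiconvex I would argue by contradiction and pass to an asymptotic cone, in the spirit of the proof of \Cref{cor:Fuerstenberg}. If no uniform constant worked, there would be sub-boxes $B_n=\prod_i\sigma_{n,i}\sq F_n$, points $a_n,b_n\in B_n$, and $z_n\in[a_n,b_n]_{\mu}$ with $D_n:=d(z_n,B_n)\to\infty$. Rescaling by $1/D_n$ and basing at $z_n$, I would form an asymptotic cone $X^{\infty}=\prod_i T_i$, on which $\mu^{\infty}$ is the standard median by \Cref{prop:unique_product_trees}. The strong limits $B^{\infty}$ of the $B_n$ and $F^{\infty}$ of the $F_n$ are then a metric panel and a maximal flat of $X^{\infty}$, with $B^{\infty}\sq F^{\infty}$; crucially, $B^{\infty}=\prod_i\beta_i$ is convex, being a product of (possibly unbounded) geodesics $\beta_i\sq T_i$. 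The basepoint yields a point $z^{\infty}\in F^{\infty}$ (using $d(z_n,F_n)\leq k$) with $d(z^{\infty},B^{\infty})=1$, so in particular $z^{\infty}\notin B^{\infty}$. On the other hand $z_n\in[a_n,b_n]_{\mu}$, and after replacing the third entry by $z_n$ itself (legitimate since $\mu(a_n,b_n,z_n)\approx_C z_n$, as in the proof of \Cref{lem:cm_basics}(4)), the point $z^{\infty}$ lies in the strong limit of the intervals $[a_n,b_n]_{\mu}$.

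Since $\mu^{\infty}$ is standard, this strong limit is a $\mu^{\infty}$-interval whose endpoints are the limits of $a_n,b_n$; as $a_n,b_n\in B_n\to B^{\infty}$ and $B^{\infty}$ is convex, it is contained in $B^{\infty}$, giving $z^{\infty}\in B^{\infty}$ and the desired contradiction. The main obstacle is precisely justifying this last containment when the boxes $B_n$ are much larger than $D_n$: then the corners $a_n,b_n$ escape to infinity in the rescaled cone, and the naive identity $z^{\infty}=\mu^{\infty}(a^{\infty},b^{\infty},z^{\infty})$ is unavailable. I would handle this coordinatewise: in each factor $T_i$, the limit $\beta_i$ of $\sigma_{n,i}$ is a geodesic with possibly ideal endpoints, and the $i$-th coordinate of the limit interval is the tree-median of $z^{\infty}_i$ with the (possibly ideal) limits of $a_{n,i},b_{n,i}\in\sigma_{n,i}$, which lands in $\beta_i$ by convexity of $\beta_i$. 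Here \Cref{prop:quasiconvex_flats} is used once more, to keep the entire limiting interval inside the flat $F^{\infty}$ so that this coordinatewise analysis is valid. Carrying out this median computation with ideal endpoints carefully is the technical heart of the argument; everything else is bookkeeping with the constants from \Cref{lem:cm_basics}.
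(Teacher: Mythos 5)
Your reduction to sub-boxes of maximal flats is harmless, but the core of the argument --- the asymptotic cone contradiction --- has a genuine gap exactly at the point you flag, and I don't see how to close it along the lines you propose. After rescaling by $1/D_n$ and basing at $z_n$, the only information that survives about $z^{\infty}$ when the corners $a_n,b_n$ escape is that $z^{\infty}\in F^{\infty}$: the relation $z_n\in[a_n,b_n]_{\mu}$ becomes vacuous in the cone because $a_n,b_n$ no longer define points there, and the coordinatewise tree-median description you want to invoke is a statement about $\mu^{\infty}$ acting on actual points of $X^{\infty}$ --- it says nothing about $\mu$ itself at scale $D_n$, which is precisely the unknown. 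The containment ``strong limit of $[a_n,b_n]_{\mu}\sq B^{\infty}$'' is what a coordinatewise formula for $\mu$ on $X$ would give you, i.e.\ it presupposes the theorem being proved. The natural repairs also fail: replacing the corners by points $a_n',b_n'\in B_n$ at bounded rescaled distance does not help because $\mu(a_n',b_n',z_n)\approx z_n$ is not known; and points of $[a_n,z_n]_{\mu}$ at bounded rescaled distance from $z_n$ are only known (via \Cref{prop:quasiconvex_flats} and \Cref{lem:cm_basics}(4)) to lie near $F_n$, not near $B_n$, so feeding them into \Cref{lem:cm_basics}(2) only re-derives $z^{\infty}\in F^{\infty}$. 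Rescaling by $1/d(a_n,b_n)$ instead fails in the complementary regime $D_n=o(d(a_n,b_n))$.

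The missing idea --- and the one the paper's proof uses --- is to exploit bushiness beyond the existence of a single bi-infinite extension: each segment $\alpha_i$ admits \emph{two} uniform quasigeodesic line extensions $\gamma_i,\gamma_i'$ which both contain $\alpha_i$ in their $\lambda$-neighbourhoods but diverge beyond its endpoints, so that any point within $D$ of both $\gamma_i$ and $\gamma_i'$ is within $D+\lambda$ of $\alpha_i$. Then $x,y\in\prod_i\alpha_i$ lie in both thickened flats $\prod_i\mc{N}_{\lambda}(\gamma_i)$ and $\prod_i\mc{N}_{\lambda}(\gamma_i')$, which are uniformly quasiconvex by \Cref{prop:quasiconvex_flats} and \Cref{lem:cm_basics}(6); hence any $z\in[x,y]_{\mu}$ is uniformly close to both products, and reading this off coordinate by coordinate pins $z$ down to $\prod_i\alpha_i$. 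No further passage to asymptotic cones is needed. I would replace your limiting argument with this ``two transversal extensions'' step.
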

\begin{proof}
Since the spaces $X_i$ are bushy, there exists a constant $\lambda$ such that for any geodesic segment $\alpha_i\sq X_i$, there exist two $(\lambda,\lambda)$--quasigeodesic lines $\gamma_i,\gamma_i'\sq X_i$ with the following properties
\begin{itemize}
    \item $\alpha_i\sq\mc{N}_{\lambda}(\gamma_i)\cap \mc{N}_{\lambda}(\gamma_i')$;
    \item for any $D\geq 0$, if $x\in X_i$ satisfies $d(x,\gamma_i), d(x,\gamma'_i)\leq D$, then $d(x,\alpha_i)\leq D+\lambda$.
\end{itemize}

Now, consider a product $\prod_{1\leq i\leq p}\alpha_i$ of geodesic segments $\alpha_i\sq X_i$, and let $\gamma_i,\gamma_i'\sq X_i$ be quasigeodesic lines as above. The products $\prod_{1\leq i\leq p}\mc{N}_{\lambda}(\gamma_i)$ and $\prod_{1\leq i\leq p}\mc{N}_{\lambda}(\gamma_i')$ are $k''$--quasiconvex for some $k''=k''(X,\mu)$, by \Cref{prop:quasiconvex_flats} and \Cref{lem:cm_basics}(6).

Using the two properties above it is then easy to show that $\prod_i\alpha_i$ is uniformly quasiconvex (roughly, given $x,y\in \prod_i\alpha_i$ and $z$ in the interval between them, $z$ is close to intervals with endpoints on $\prod_{1\leq i\leq p}\gamma_i$ and $\prod_{1\leq i\leq p}\gamma'_i$ by \Cref{lem:cm_basics}(4), therefore it is close to both these two products by quasiconvexity, and therefore $z$ is close to $\prod_i\alpha_i$).
\end{proof}

\begin{thm}
\label{thm:prod_hyp_unique_median}
Let $X=\prod_{1\leq i\leq p}X_i$ for some bushy geodesic hyperbolic spaces $X_i$. Then there is a unique coarse median structure on $X$.
\end{thm}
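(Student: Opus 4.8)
The plan is to show that every coarse median $\mu$ on $X$ is uniformly close to the standard coarse median $m$; since $m$ is fixed, this immediately gives that any two coarse medians on $X$ are close to one another, hence that the coarse median structure is unique. Fix three points $x=(x_i),y=(y_i),z=(z_i)\in X$ written in coordinates. For each pair I would introduce the product of geodesic segments $P_{xy}:=\prod_{1\leq i\leq p}[x_i,y_i]$, where $[x_i,y_i]\sq X_i$ is a geodesic, and likewise $P_{yz}$ and $P_{zx}$. By \Cref{cor:straight_cubes} each of these products is $k'$--quasiconvex with respect to $\mu$, and since $x,y\in P_{xy}$ we obtain $[x,y]_\mu\sq\mc{N}_{k'}(P_{xy})$, and symmetrically $[y,z]_\mu\sq\mc{N}_{k'}(P_{yz})$ and $[z,x]_\mu\sq\mc{N}_{k'}(P_{zx})$.

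The geometric heart of the argument is a purely metric fact about the product $X$, independent of any coarse median. Let $\delta$ bound the hyperbolicity constants of the factors $X_i$. I would show that there is a bound $B=B(R,\delta,p)$ on the diameter of the triple intersection $\mc{N}_R(P_{xy})\cap\mc{N}_R(P_{yz})\cap\mc{N}_R(P_{zx})$. This reduces to a factor-wise statement: since $d_i\leq d$ for any reasonable product metric, a point $w=(w_i)$ in this intersection has each coordinate $w_i$ within $R$ of all three sides $[x_i,y_i]$, $[y_i,z_i]$, $[z_i,x_i]$ of the geodesic triangle on $x_i,y_i,z_i$. In a $\delta$--hyperbolic space the set of points within $R$ of all three sides of a triangle is coarsely the centre of that triangle and has diameter $O(R+\delta)$, as one sees by comparison with a tripod. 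Summing over the $p$ factors yields $B$. The same factor-wise analysis shows that the standard median point $m(x,y,z)=\big(m_i(x_i,y_i,z_i)\big)$ lies in this triple intersection: each $m_i(x_i,y_i,z_i)$ lies in $[x_i,y_i]_{m_i}$ by \Cref{lem:cm_basics}(3), and since geodesics in $X_i$ are uniformly Morse, \Cref{cor:Morse_qc} forces this interval near the geodesic $[x_i,y_i]$; thus $m_i(x_i,y_i,z_i)$ is within a bounded distance of all three sides, and $m(x,y,z)\in\mc{N}_R(P_{xy})\cap\mc{N}_R(P_{yz})\cap\mc{N}_R(P_{zx})$ for a uniform $R$.

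It then suffices to place $\mu(x,y,z)$ in the same bounded set. By \Cref{lem:cm_basics}(3) the point $\mu(x,y,z)$ lies, up to bounded error, in $[x,y]_\mu\cap[y,z]_\mu\cap[z,x]_\mu$, and by the first paragraph each of these intervals sits inside the appropriate $\mc{N}_{k'}(P)$. Hence $\mu(x,y,z)$ also lies in the triple intersection $\mc{N}_{R}(P_{xy})\cap\mc{N}_{R}(P_{yz})\cap\mc{N}_{R}(P_{zx})$ after enlarging $R$ by a constant depending only on $C$ and $k'$. Since this set has diameter at most $B$, we conclude $d\big(\mu(x,y,z),m(x,y,z)\big)\leq B$, with $B$ independent of the chosen points. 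Therefore $\mu$ is $B$--close to $m$; as $\mu$ was arbitrary, all coarse medians on $X$ are close to the standard one, proving uniqueness.

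The main obstacle is the metric lemma bounding the diameter of the triple intersection of neighbourhoods of the three products, together with the verification that both $m(x,y,z)$ and $\mu(x,y,z)$ land in it; everything else is bookkeeping with \Cref{lem:cm_basics} and \Cref{cor:straight_cubes}. The one subtlety in that lemma is to keep the argument insensitive to the choice of product metric ($\ell^1$, $\ell^2$, or $\ell^\infty$), which is harmless because these are bi-Lipschitz and because membership in $\mc{N}_R(P_{xy})$ already forces $d_i(w_i,[x_i,y_i])\leq R$ in each factor regardless of the choice.
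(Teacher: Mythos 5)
Your proposal is correct and follows essentially the same route as the paper: both hinge on \Cref{cor:straight_cubes} to trap $[x,y]_\mu$ in a bounded neighbourhood of the product of geodesics $\prod_i[x_i,y_i]$, and then locate both $\mu(x,y,z)$ and $m(x,y,z)$ in a triple intersection of uniformly bounded diameter. The only difference is that the paper gets the diameter bound immediately from \Cref{lem:cm_basics}(3) applied to the standard coarse median $m$ (whose intervals are at bounded Hausdorff distance from these products of geodesics), whereas you re-derive it by a factor-wise tripod comparison in the hyperbolic factors --- valid, but more work than necessary.
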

\begin{proof}
As in the above discussion, let $[m_i]$ be the unique coarse median structures on the $X_i$, and let $[m]$ be the standard coarse median structure on $X$. Let $\mu$ an arbitrary coarse median on $X$. \Cref{cor:straight_cubes} yields a constant $k'\geq 0$ such that all products of geodesics $\prod_{1\leq i\leq p}\alpha_i$ are $k'$--quasiconvex with respect to $\mu$. In particular, for $x,y\in X$, the intervals $[x,y]_{\mu}$ are contained in uniformly bounded neighbourhoods of the intervals $[x,y]_m$ (which are at uniformly bounded Hausdorff distance from products of geodesics). An application of \Cref{lem:cm_basics}(3) then yields that $m$ and $\mu$ are close, and hence $[m]=[\mu]$ as required.
\end{proof}

\begin{rmk}\label{rmk:R_factor}
A product $Y=X\times \R$, where $X$ is an unbounded coarse median space, cannot have a unique coarse median structure. This is essentially because the product structure on $Y$ is not canonical, as we can replace the map to the $\R$--factor with any map of the form $(x,y)\mapsto y+f(x)$ for $f$ an unbounded Lipschitz function on $X$ (e.g.\ the distance function from a basepoint). Different ways of describing $Y$ as a product yield different coarse median structures.
\end{rmk}

\bibliography{./mybib}
\bibliographystyle{alpha}

\end{document}